\newtheorem{thm}{Theorem}[section]
\newtheorem{corollary}[thm]{Corollary}
\newtheorem{lemma}[thm]{Lemma}
\newtheorem{proposition}[thm]{Proposition}
\newtheorem{thm-dfn}[thm]{Theorem-Definition}
\theoremstyle{definition}
\newtheorem{definition}[thm]{Definition}
\newtheorem{remark}[thm]{Remark}
\newtheorem{example}[thm]{Example}
\numberwithin{equation}{section}
\newcommand{\quash}[1]{}  %%Anything in \quash is ignored
\newcommand{\rW}{{\mathrm W}}
\newcommand{\bG}{{\mathbb G}}
\newcommand{\mF}{\mathcal{F}}
\newcommand{\mL}{\mathcal{L}}
\newcommand{\on}{\operatorname}
\newcommand{\is}{\simeq}
\newcommand{\ra}{\rightarrow}
\newcommand{\calC}{\mathcal C}
\newcommand{\barQ}{\bar{\mathbb Q}_\ell}
\begin{document}

\title{Functorial transfer for reductive groups and central complexes}

\begin{abstract}
A class of Weyl group equivariant 
$\ell$-adic complexes on a torus, called the 
central complexes, was introduced and studied in our previous work on Braverman-Kazhdan conjecture. In this note 
we show that the category of central complexes 
admits functorial monoidal  transfers with respect to morphisms between the dual groups.
Combining with the work of Bezrukavnikov-Deshpande, we show that the $\ell$-adic bi-Whittaker categories (resp. the category of vanishing complexes, the category of 
stable complexes) on reductive groups 
admit functorial transfers.

As an application, we give a new geometric proof of Laumon-Letellier's fromula for  transfer maps of stable functions on finite reductive groups.

\quash{
Let $G$ and $G'$ be two connected reductive groups
over an algebraically closed field.
Given a morphism 
   $\hat\rho:\hat G'\to \hat G$ between their Langlands dual groups, we construct a functorial monoidal  transfer morphism between the $\ell$-adic bi-Whittaker category 
   (resp. the category 
   vanishing complexes)
   of $G$ 
 and $G'$.
   The construction 
relies on the  work of Bezrukavnikov-Deshpande 
   on  bi-Whittaker categories 
   and the category of $\rW$-equivariant 
central  complexes on 
   torus introduced in our previous work on Braverman-Kazhdan conjecture.

}

\author{Tsao-Hsien Chen}
\end{abstract}

\maketitle
\setcounter{tocdepth}{1}
\tableofcontents

\section{Introduction}
\subsection{}
Let $k$ be an algebraically closed field of characteristic $\on{char}(k)=p>0$
and let $\ell$ be a prime number different from $p$. 
Let $G$ be a connected reductive group over $k$. Let $T$ be a maximal tours of $G$ and let 
$\rW=N(T)/T$ be the Weyl group.
Denote by $D_\rW(T)$  the bounded derived category of $\rW$-equivaraint $\ell$-adic complexes on $T$. 
In our previous work \cite{C1,C2}, we introduced and studied 
a certain 
monoidal subcategory 
$D_\rW^\circ(T)\subset D_\rW(T)$, called the category of $\rW$-equivariant
central complexes on $T$.
The category $D_\rW^\circ(T)$ can be viewed as the $\ell$-adic counterpart of  
the category of modules over the quantum Toda lattices or nil-Hecke algebras, see \cite{G,L}.

In this paper, we show that the category $D_\rW^\circ(T)$ of central complexes admits 
functorial transfers with respect to 
morphisms 
between dual groups (Theorem \ref{transfer:central complexes}):  
given two connected reductive groups $G'$ and $G$
over $k$ and a 
 morphism $\hat\rho:\hat G'\to \hat G$ 
 between their complex dual groups, 
 there is a canonical monoidal transfer morphism
\[\Phi_{\hat\rho}:D^\circ_\rW(T)\to D^{\circ}_{\rW'}(T')\]
which is functorial with respect to  compositions of morphisms between dual groups. This is a bit interesting since in general the dual map on torus 
$\rho_{T,T'}:T\to T'$ is not compatible with 
actions of the Weyl groups $\rW$ and $\rW'$
(see Section \ref{dual map}).

Combining with the work of Bezrukavnikov-Deshpande \cite{BT}, we show that  
the $\ell$-adic bi-Whittaker category $\on{Wh}_G$ of $G$,
the category $D^\circ_G(G)$ of vanishing  complexes, and the 
 subcategory of stable complexes
$D^{st}_G(G)\subset D^\circ_G(G)$ (introduced in this note, see Definition \ref{stabble complexes}), 
admit functorial transfers with respect to morphisms bewteen dual groups (Theorem \ref{transfer:whittaker} and Theorem \ref{Trans for vansihing}).

In the case when $G$ is defined over a finite field, 
we show that the 
transfer morphism for  stable complexes 
on $G$
provides a geometrization of the transfer map
for stable functions on the  finite reductive group $G^F$ (Theorem \ref{geometrization}). As an application, we give a new geometric proof of 
Laumon-Letellier's fromula for transfer maps for stable functions \cite{LL1}
(Corollary \ref{LL formula}).

\subsection{Acknowledgement}
 A part of the
work was done while the author visited the Collège de France and he would like to thank
the research institute its inspiring environment.
The author would like to 
thank R. Bezrukavnikov,  V.Ginzburg and B.C. Ng\^o for useful discussion.
The research  is supported
by NSF grant DMS-2143722.

\section{Central complexes}

\subsection{}
Let $T$ be a maximal torus of $G$ and 
$B$ be a Borel subgroup containing $T$ with 
unipotent radical $U$. 
Let $N=N_G(T)$ is the normalizer of $T$ in $G$ and we denote by
$\rW=\rW_G=N/T$ the Weyl group.
Let 
$\mathcal C(T)(\bar{\mathbb Q}_\ell)$ be the set consisting of 
characters of the tame \'etale fundamental group $\pi_1(T)^t$.
For any $\chi\in\mathcal C(T)(\bar{\mathbb Q}_\ell)$ we denote by 
$\mL_\chi$ the corresponding tame local system on
$T$ (a.k.a the Kummer local system associated to $\chi$).
The Weyl group $\rW$ acts naturally on $\calC(T)(\barQ)$ and for any $\chi\in\calC(T)(\barQ)$, 
we denote by $\rW_\chi'$ the stabilizer of 
$\chi$ in $\rW$ and 
$\rW_\chi^\circ\subset\rW_\chi$, the subgroup of $\rW_\chi'$ generated by those reflections 
$s_\alpha$ satisfying the following property:
we have $(\check\alpha)^*\mL_\chi\is\barQ$, where 
$\check\alpha:\bG_m\to T$ is the coroot associated 
to $\alpha$. The group
$\rW^\circ_\chi$ is a normal subgroup of $\rW_\chi$ and,
in general, we have $\rW_\chi^\circ\subsetneq\rW_\chi$.

Denote by
$ D_\rW(T)$ (resp. $D_N(T)$) symmetric monoidal category 
the $\rW$-equivariant (resp. $N$-equivariant) bounded derived category of constructible 
$\ell$-adic complexes on $T$ with monidal structure given by 
the $!$-convolution
$\mF\star\mF'=m_!(\mF\boxtimes\mF')$
where $m:T\times T\to T$ is the multiplication.
Note that there is an equivalence between $D_N(T)
$ and the category 
$D_\rW(\frac{T}{T})$ of 
$\rW$-equivariant complexes on the quoteint stack $\frac{T}{T}$ (see, e.g., \cite[Section 6.1]{LL1}).
We denote by $\on{sign}_\rW\in D_\rW(T)$ the 
constant local system on $T$ with the $\rW$-equivariant structure given by the sign character of $\rW$.

For any $\mF\in D_\rW(T)$
and $\chi\in\calC(T)(\barQ)$, the 
$\rW$-equivariant structure on $\mF$
together with
the natural $\rW_\chi'$-equivivariant structure on $\mL_\chi$
give rise to an
action of 
$\rW_\chi$ on the
\'etale cohomology groups
$H_c^*(T,\mF\otimes\mL_\chi)$ (resp. $H_c^*(T,\mF\otimes\mL_\chi)$).
In particular, we get an action of the subgroup 
$\rW_\chi^\circ\subset\rW_\chi$ on the cohomology groups above.

 Recall the notion of 
 central complexes and strongly central complexes introduced in \cite{C1,C2, BT}.
\footnote{This notion agrees with that defined in \emph{loc. cit.} after twisting the $\rW$-equivariance structure by the sign character of $\rW$.}
\begin{definition}\label{def of central}
(1) A $\rW$-equivariant complex $\mF\in D_\rW(T)$ 
is called \emph{central} (resp. \emph{strongly central}) if for any tame character $\chi\in\calC(T)(\barQ)$, the group
$\rW^\circ_\chi$ (resp. $\rW_\chi$) acts trivially on  $H_c^*(T,\mF\otimes\mL_\chi)$. 
%via the sign character $\on{sign}_\rW:\rW\to\{\pm1\}$.
We denote by $D^\circ_\rW(T)\subset D_\rW(T)$ the monoidal subcategory of $\rW$-equivariant central complexes on 
$T$ and $D^{st}_\rW(T)$ the  monoidal subcategory of strongly central complexes.

(2) A $N$-equivariant complex $\mF\in D_N(T)$ is  called \emph{central} (resp. \emph{strongly central}) if its image in 
$D_W(T)$ under the natural forgetful functor is central (resp. strongly central). We denote by $D^\circ_N(T)\subset D_N(T)$ the monoidal subcategory of 
$N$-equivariant central complexes on 
$T$ and $D^{st}_N(T)$ the  monoidal subcategory of $N$-equivariant strongly central complexes.

(3) We denote by 
$D^\circ_\rW(T)^\heartsuit$ (resp. $D^\circ_N(T)^\heartsuit$) and $D^{st}_W(T)^\heartsuit$ (resp. $D^{st}_N(T)^\heartsuit$) for the  categories of central and strongly central perverse sheaves on $T$.

\end{definition}

\begin{example}\label{example of central compelxes}
(1) In the case $G=SL_2$,
$\rW^\circ_\chi=1$  if and only if $\chi=1$
and thus 
a complex $\mF\in D_\rW(T)$ is central if and only if $\rW$ acts trivially on $H_c^*(T,F)$.
For example, any non-trivial tame local system 
$\mL_\chi$ satsifying 
$H^*_c(T,\mL_\chi)=0$ and hence is central.

Note that the non-trivial tame local system 
$\mF=\mL_\chi$ of order two $\chi^2=1$ is central but is not strongly central as $\rW_\chi=\rW$
and $H_c^*(T,\mF\otimes\mL_\chi)^\rW=
H_c^*(T,\bar{\mathbb Q}_\ell)^\rW\neq H_c^*(T,\bar{\mathbb Q}_\ell)$.

(2) Let $G=GL_n$. Consider the trace map 
$\on{tr}_T:T\is\bG_m^n\to\mathbb A^1, (t_1,...,t_n)\to \sum_{i=1}^n t_i$.
The pullback $\on{tr}_T^*(\mL_\psi)$ of the 
Artin–Schreier sheaf $\mL_\psi$ on $\mathbb A^1$
is naturally a $\rW$-equivaraint complex and 
it was shown in \cite{C1} that 
$\on{tr}_T^*(\mL_\psi)\otimes\on{sign}_\rW$
is a central complex on $T$.

\end{example}

\begin{remark}\label{center}
If the center of $G$ is connected, then 
$\rW^\circ_\chi=\rW_\chi$ and hence 
there is no differnce between central and stongly central complexes.
\end{remark}

\section{Dual morphisms}\label{dual maps}
\subsection{}\label{construction}
Let $G$ and $G'$ be two connected reductive group over $k$ and 
let $\hat\rho:\hat G'\to\hat G$ be a morphism between their Langlands dual groups. 
Let $T'$ be a maximal  $G'$ with 
dual torus $\hat T'$ and we denote by
$\hat Z=\rho^\vee(\hat T')\subset \hat G$ its image  which is a torus in $\hat G$. Let $\hat L=Cen_{\hat G}(\hat Z)$ be the Levi subgroup of centralizers of $\hat Z$ in $\hat G$. 
Let $L\subset  G$ be the dual Levi of $\hat L$ and we pick a maximal torus  
$T\subset L$ such that the dual torus 
$\hat T\subset \hat L$ containing $\hat Z$.

The map $\hat\rho$ restricts a map
\[\hat\rho_{T,T'}:=\hat\rho|_{\hat T'}:\hat T'\twoheadrightarrow \hat Z\hookrightarrow \hat T\]
between the dual maxmial torus
and hence, by 
duality for torus, a morphism
\begin{equation}\label{dual map}
    \rho_{T,T'}:T\twoheadrightarrow Z\hookrightarrow T'
\end{equation}
between maximal torus.
On the other hand,
we have a map 
\[\rho_{\rW',\rW}:\rW'\is N_{\hat G'}(\hat T')/\hat T'\stackrel{\hat\rho}\longrightarrow 
N_{\hat G}(\hat Z)/\hat L=
N_{\hat G}(\hat L)/\hat L\is N_{\rW}(\rW_{L})/\rW_{L} \]
where $N_{\rW}(\rW_{L})$ is the normailizer of the Weyl group $\rW_{L}=N_L(T)/T$ in $\rW$. It follows from the construction that, 
for any $w'\in \rW'$ and any lift 
$w\in N_{\rW}(\rW_L)$
of $\rho_{\rW',\rW}(w')\in N_{\rW}(\rW_{L})/\rW_{L}$, there is a commutative diagram
\begin{equation}\label{lifting}
\xymatrix{T\ar[r]^{\rho_{T,T'}}\ar[d]^{w}&T'\ar[d]^{w'}\\
T\ar[r]^{\rho_{T,T'}}&T'}.
\end{equation}

\begin{example}\label{normal morphism}
    (1) Assume $\hat G'=\hat M$ is a Levi subgroup of $\hat G$ 
    and 
    $\hat\rho:\hat M\to\hat G$ is the inclusion. We have $T'=Z=T=L$, $\rho_{T,T'}=\text{id}:T\to T$, and 
    $\rho_{\rW',\rW}:\rW_M\to\rW$ is the  embedding.
    
    (2) (Normal morphism) Let $\hat\rho:\hat G'\to\hat G$ be a normal morphism,
     that is, when $\hat\rho(\hat G')$ is a normal subgroup. 
        Then we have a natural lift 
        $\tilde\rho_{\rW',\rW}:
        \rW'\to N_{\rW}(\rW_L)$
        of $\rho_{\rW',\rW}$.
        It suffices to deal with the cases when $\hat\rho$ is surjection or is an embedding of closed connected normal reductive subgroup.
       The first case is clear, since $\hat Z=\hat T=\hat L$
    and $\rW_L=e$. 
    In the second case, 
    There is an isogeny $\hat\rho_1:\hat G'_1:=\hat G'\times S\to \hat G$ extending $\hat\rho$, where $S$
    is a connected reductive subgroup. 
    Let $T_S\subset S$ be a maxmial torus of $S$. Then  
$\hat L'_1=Cen_{\hat G'_1}(\hat T')=\hat T'\times S$
    is a  Levi subgroup of $\hat G_1'$ with maximal torus  $\hat T_1'=\hat T'\times T_S$.
    The image
     $\hat L=\hat\rho_1(\hat L_1)$
     is a Levi subgroup with maximal torus 
    $\hat T=\hat\rho_1(\hat T_1') $
and the map $\hat\rho
$ restricts to a map
\[N_{\hat G'}(\hat T')\subset N_{\hat G'}(\hat T'_1)\cap N_{\hat G'}(\hat L'_1)\stackrel{\hat\rho}\to N_{\hat G}(\hat T)\cap N_{\hat G}(\hat L)\]
which induces the desired lift
$\rW'\to N_\rW(\rW_L)$.

Note that, according to \cite[Section 2.5]{B} or \cite[Proposition 3.1.8]{LL1}, there is a dual morphism $\rho:G\to G'$ such that 
$\rho|_{T}=\rho_{T,T'}:T\to T'$.
    \end{example}

The following lemma will be used later (for the case of when the center of $G$ is disconnected).
  \begin{lemma}\label{z-extension}
  Let $w'$ and $w$ be as above.
Let $\chi'$ be a tame character of $T'$
and let $\chi=\rho_{T,T'}^*\chi'$.
We have
i) $w'\in\rW'_{\chi'}$ implies $w\in\rW_\chi$.
ii) $w'\in(\rW'_{\chi'})^\circ$ implies $w\in\rW^\circ_\chi$.

  \end{lemma}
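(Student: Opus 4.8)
The plan is to unwind both assertions directly from the definitions of $\rW_\chi$, $\rW_\chi'$, and $\rW_\chi^\circ$, using the commutative diagram \eqref{lifting} as the fundamental tool. Recall we are given $w' \in \rW'$ and a lift $w \in N_\rW(\rW_L)$ of $\rho_{\rW',\rW}(w') \in N_\rW(\rW_L)/\rW_L$, and we set $\chi = \rho_{T,T'}^*\chi'$. The key compatibility is that \eqref{lifting} gives $\rho_{T,T'} \circ w = w' \circ \rho_{T,T'}$ as maps $T \to T'$. Dually (or by functoriality of pullback of local systems), this yields $w^*(\rho_{T,T'}^*\chi') = \rho_{T,T'}^*(w'^*\chi')$ in $\calC(T)(\barQ)$, i.e. $w^*\chi = \rho_{T,T'}^*(w'^*\chi')$.

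\textbf{Proof of (i).} Suppose $w' \in \rW'_{\chi'}$, so $w'^*\chi' = \chi'$ (as characters of $\pi_1(T')^t$). Then by the displayed identity $w^*\chi = \rho_{T,T'}^*\chi' = \chi$, so $w \in \rW_\chi$. (I should be mildly careful about the precise meaning of "stabilizer": the paper writes $\rW_\chi'$ for the stabilizer of $\chi$ and then, somewhat elliptically, $\rW_\chi$ with $\rW_\chi^\circ \subset \rW_\chi$; I will read $\rW_\chi$ as the stabilizer of $\chi$ — equivalently of $\mL_\chi$ up to isomorphism — and the argument above applies verbatim.)

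\textbf{Proof of (ii).} Now suppose $w' \in (\rW'_{\chi'})^\circ$. By definition $(\rW'_{\chi'})^\circ$ is generated by those reflections $s_{\alpha'} \in \rW'$ with $(\check\alpha')^*\mL_{\chi'} \is \barQ$, i.e. the coroot $\check\alpha'$ satisfies $\chi' \circ \check\alpha' = 1$ as a character of $\bG_m$. So it suffices to treat the case where $w' = s_{\alpha'}$ is such a generating reflection, and then assemble: write $w' = s_{\alpha'_1}\cdots s_{\alpha'_k}$, lift each $s_{\alpha'_j}$ via $\rho_{\rW',\rW}$ to some $w_j \in N_\rW(\rW_L)$ landing in $\rW^\circ_\chi$ (part (ii) for reflections), and note $w_1\cdots w_k$ is a lift of $\rho_{\rW',\rW}(w')$; since $\rW^\circ_\chi$ is a group and lifts of a fixed element differ by $\rW_L$, I must also check $\rW_L \subset \rW^\circ_\chi$ under the hypothesis on $\chi$ — but $\rW_L = \rW_\chi^\circ \cap \rW_L$ is automatic here because $\hat L = Cen_{\hat G}(\hat Z)$ and $\hat Z = \rho^\vee(\hat T')$, so every coroot $\check\alpha$ of $L$ is trivial on $Z = \rho_{T,T'}(T)$, hence $\check\alpha^*\mL_\chi = \check\alpha^*\rho_{T,T'}^*\mL_{\chi'}$ is trivial since $\rho_{T,T'}\circ\check\alpha$ is constant on $\bG_m$ — wait, more precisely $\check\alpha$ factors through $\ker(\rho_{T,T'})$, giving $s_\alpha \in \rW_\chi^\circ$. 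So $\rW_L \subset \rW_\chi^\circ$ and the assembly goes through.

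\textbf{The reflection case, the main point.} It remains to handle $w' = s_{\alpha'}$ with $\chi'\circ\check\alpha' = 1$. The image $\rho_{\rW',\rW}(s_{\alpha'}) \in N_\rW(\rW_L)/\rW_L$ need not be a reflection, and this is where the real content lies: I expect $\rho_{\rW',\rW}(s_{\alpha'})\rW_L$ to be represented by a reflection $s_\alpha$ (or a product of reflections in $\rW_L$-cosets) whose associated coroots $\check\alpha$ satisfy $\rho_{T,T'}\circ\check\alpha$ factoring appropriately so that $\check\alpha^*\mL_\chi = \check\alpha^*\rho_{T,T'}^*\mL_{\chi'}$ is trivial — using that $\mL_\chi$ on $T$ is pulled back from $T'$, and that the relevant coroot of $G'$ mapped through $\hat\rho$ controls the coroot of $G$. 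Concretely: a reflection $s_{\alpha'} \in \rW'_{\chi'}$ corresponds under $\hat\rho$ to a reflection (or element) of $N_{\hat G}(\hat L)/\hat L$; lifting to $\rW$, the corresponding root $\alpha$ of $G$ has coroot $\check\alpha$ whose composite with $\rho_{T,T'}: T \to Z \hookrightarrow T'$ equals (up to the torsion captured by $\ker$) the coroot $\check\alpha'$ composed with $\bG_m \to \hat$-side, and the triviality $(\check\alpha')^*\mL_{\chi'}\is\barQ$ transfers to $(\check\alpha)^*\mL_\chi = (\check\alpha)^*\rho_{T,T'}^*\mL_{\chi'} = (\rho_{T,T'}\circ\check\alpha)^*\mL_{\chi'} \is \barQ$. \textbf{The hard part} is precisely pinning down this correspondence between coroots: tracking how a coroot $\check\alpha'$ of $G'$ on which $\chi'$ is trivial produces, via the dual morphism $\hat\rho$ and the identification $N_{\hat G}(\hat L)/\hat L \is N_\rW(\rW_L)/\rW_L$, a well-defined reflection coset in $\rW/\rW_L$ whose coroots factor through $\rho_{T,T'}$ correctly — one must use that $\hat\rho(\hat T') = \hat Z$ and that on $\hat Z$ the relevant coroot relations are preserved, handling the disconnected-center subtleties (the discrepancy $\rW_\chi^\circ \subsetneq \rW_\chi$) via the $z$-extension/isogeny trick as in Example \ref{normal morphism}(2), reducing to the normal-morphism case where an honest lift $\tilde\rho_{\rW',\rW}$ exists.
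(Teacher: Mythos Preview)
Your argument for part (i) is correct and matches the paper's (which simply cites diagram~\eqref{lifting}).

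For part (ii), however, your direct reflection-by-reflection approach has a genuine gap that you yourself flag but do not close. You correctly observe that $\rW_L\subset\rW^\circ_\chi$ (since every coroot of $L$ dies under $\rho_{T,T'}$), so the conclusion is independent of the lift, and you correctly reduce to a single generating reflection $s_{\alpha'}$ with $(\check\alpha')^*\mL_{\chi'}\simeq\barQ$. But the step where you assert that a lift $w\in N_\rW(\rW_L)$ of $\rho_{\rW',\rW}(s_{\alpha'})$ is itself a reflection $s_\alpha$ for some root $\alpha$ of $G$ with $\rho_{T,T'}\circ\check\alpha$ related to $\check\alpha'$ is not justified: the quotient $N_\rW(\rW_L)/\rW_L$ is not a reflection group, there is no natural map $X_*(T')\to X_*(T)$ carrying $\check\alpha'$ anywhere, and the phrase ``the corresponding root $\alpha$ of $G$'' has no definition. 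Your final paragraph acknowledges this as ``the hard part'' and gestures at $z$-extensions only as a device for ``disconnected-center subtleties'' after the coroot-tracking is done --- but the coroot-tracking is never done.

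The paper avoids this entirely. It does not attempt to follow individual reflections or coroots. Instead it invokes Kottwitz's $z$-extensions to produce a commutative square of dual-group maps $\hat H'\to\hat H$ over $\hat G'\to\hat G$, where $H$ and $H'$ have connected center (because $\hat H_{dr}$, $\hat H'_{dr}$ are simply connected). One lifts $\chi'$ to a character $\chi_{H'}$ of $T_{H'}$ and sets $\chi_H=\phi_{T_H,T_{H'}}^*\chi_{H'}$. The key external input is \cite[Lemma 2.9]{BT}, which gives $\rW'_{\chi_{H'}}=(\rW'_{\chi'})^\circ$ and $\rW_{\chi_H}=\rW^\circ_\chi$. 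Now $w'\in(\rW'_{\chi'})^\circ$ means $w'\in\rW'_{\chi_{H'}}$, and part (i) applied to $\hat\phi$ gives $w\in\rW_{\chi_H}=\rW^\circ_\chi$. In other words, the $z$-extension is not a patch for edge cases but the whole mechanism: it converts the $\rW^\circ$-statement for $G,G'$ into the $\rW$-statement for $H,H'$, where part (i) already applies.
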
  
\begin{proof}
Part i) follows from 
 the diagram ~\eqref{lifting}. 
For part ii), if the center of $G$ is connected then we have $\rW^\circ_\chi=\rW_\chi$
(Remark \ref{center}) and the claim follows from part i).
To deal with the case of disconneted center, we use the 
notion of $z$-extension in \cite{K}.
According to 
\cite[Lemma 2.4.4]{K},
we can find finite central extensions
$1\to \hat C\to \hat H\to \hat G\to 1$
and $1\to \hat C'\to \hat H'\to \hat G'\to 1$
satisfying i) the derived subgroups  $\hat H_{dr},\hat H'_{dr}$
are simply connected
ii) there is a commutative diagram 
\begin{equation}\label{central extension}
    \xymatrix{\hat H\ar[r]^{\hat\phi}\ar[d]&\hat H'\ar[d]\\
\hat G'\ar[r]^{\hat \rho}&\hat G}.
\end{equation}
Let $H,H'$ be the connected reductive groups over $k$ with dual groups $\hat H,\hat H'$. Note that $H,H'$ have connected centers since the derived subgroups of their dual groups are simply connected.
Let $T_H, T_{H'}$ be the maximal torus of $H,H'$ such that the dual torus 
$\hat T_H,\hat T_{H'}$ are the 
pre-imaiges of $\hat T,\hat T'$.
There are natural identifications of Weyl groups of $G$ and $H$ (resp. $G'$ and $H'$)
and the diagram~\eqref{central extension} and~\eqref{lifting} 
give rise to 
commutative diagrams 
\[\xymatrix{ T\ar[r]^{\rho_{T,T'}}\ar[d]_{\pi}&T'\ar[d]^{\pi'}\\ T_H\ar[r]^{\phi_{T_H,T_{H'}}}&T_{H'}}\ \ \ \ \ \ \ \ \ \ \xymatrix{ T_H\ar[r]^{\phi_{T_H,T_{H'}}}\ar[d]_{w}&T_{H'}\ar[d]^{w'}\\ T_H\ar[r]^{\phi_{T_H,T_{H'}}}&T_{H'}}.\]
where $\pi,\pi'$ are Weyl group equivariant central isogenies. One can find a tame character   $\chi_{H'}$ of $T_{H'}$ such that $\chi'=(\pi')^*\chi_{H'}$. Let  $\chi_H=(\phi_{T_H,T_H'})^*\chi_{H'}$ and 
the left diagram above implies 
$\chi=\pi^*\chi_H$.   
By \cite[Lemma 2.9]{BT}, we have 
$\rW'_{\chi_{H'}}=(\rW^{'}_{\chi'})^\circ$ and $\rW_{\chi_H}=\rW_\chi^\circ$.
Thus if $w'\in\rW'_{\chi_{H'}}=(\rW'_{\chi'})^\circ$ then
the right diagram above implies 
$w\in \rW_{\chi_H}=\rW^\circ_\chi$.
The desried claim follows.
\end{proof}

\section{Functorial transfers}
In this section we preserve the setups in Section \ref{dual map}.
\subsection{Transfer for $\rW$-equivariant central complexes}
Let $\mF\in D_\rW(T)$ be a $\rW$-equivariant complex on $T$
and let $\mF'=(\rho_{T,T'})_!(\mF)\in D(T')$. 
For any $w'\in\rW'$ and a lift 
$w\in N_\rW(\rW_L)$ of $\rho_{\rW',\rW}(w')\in N_\rW(\rW_L)/\rW_L$, 
the $\rW$-equivaraint structure on $\mF$  gives rise to an isomorphism 
\[b_{w}:(w')^*\mF'\is 
(w')^*(\rho_{T,T'})_!(\mF)\is(\rho_{T,T'})_!( w)^*\mF\is (\rho_{T,T'})_!\mF\is\mF'.\]

\begin{lemma}\label{W-equ}
Assume $\mF\in D^\circ_\rW(T)$ is a $\rW$-equivariant central complex on $T$.
Then the isomorphism $b_{w}$
above depends only on $w'$ (that is, it
is independent of the choice of the lift $w$) and the collection 
$\{a_{w'}:=b_{w}:(w')^*\mF'\is\mF'\}_{w'\in\rW' }$
defines a $\rW'$-equivariant structure on $\mF'$. 
    
\end{lemma}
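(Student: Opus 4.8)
The plan is to first understand how $b_w$ depends on the choice of lift $w$, and then to check the cocycle condition. Recall that any two lifts of $\rho_{\rW',\rW}(w') \in N_\rW(\rW_L)/\rW_L$ differ by an element of $\rW_L$; so it suffices to show that if $w$ and $w\,v$ with $v \in \rW_L$ are two such lifts, then $b_w = b_{wv}$ as isomorphisms $(w')^*\mF' \is \mF'$. Unwinding the definition of $b_w$, the difference between $b_w$ and $b_{wv}$ is measured by the automorphism of $(\rho_{T,T'})_!\mF$ obtained from the $\rW$-equivariant structure applied to $v \in \rW_L$: concretely, since $\rho_{T,T'}$ factors through $Z = \rho_{T,T'}(T)$ and $v$ acts trivially on $Z$ (because $v \in \rW_L$ fixes the central torus $Z$ pointwise, as $\hat Z$ is central in $\hat L$), the two isomorphisms differ by the map $(\rho_{T,T'})_!(c_v)$ where $c_v : v^*\mF \is \mF$ is the structure isomorphism for $v$. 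So the claim reduces to: $(\rho_{T,T'})_!(c_v) = \mathrm{id}$ on $(\rho_{T,T'})_!\mF$ for every $v \in \rW_L$.

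This is where centrality enters, and I expect it to be the main obstacle. The morphism $\rho_{T,T'} : T \twoheadrightarrow Z \hookrightarrow T'$ is a composition of a surjection $q: T \to Z$ with a closed embedding. Since $v \in \rW_L$ acts trivially on $Z$, we have $q \circ v = q$, so $q_!(c_v)$ is an automorphism of $q_!\mF \in D(Z)$; I want to show it is the identity. The strategy is to detect this on stalks after tensoring with all Kummer local systems on $Z$: an object of $D(Z)$ together with an endomorphism is controlled, up to the relevant equivalence, by the induced maps on $H^*_c(Z, (q_!\mF) \otimes \mL_{\chi_Z}) = H^*_c(T, \mF \otimes q^*\mL_{\chi_Z})$ for $\chi_Z$ ranging over tame characters of $Z$, together with the monodromy — and $q^*\mL_{\chi_Z}$ is exactly the Kummer local system $\mL_\chi$ attached to a tame character $\chi$ of $T$ pulled back via $\rho_{T,T'}$. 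Now Lemma \ref{z-extension}(ii) says that since $v$ lies in $(\rW'_{\chi'})^\circ$ — more precisely, one checks $v \in \rW_L$ together with the compatibility means $v$ lies in the appropriate parabolic-type subgroup, and the relevant hypothesis is that $v$ acts trivially enough — we get $v \in \rW^\circ_\chi$. By the definition of central complex (Definition \ref{def of central}), $\rW^\circ_\chi$ acts trivially on $H^*_c(T, \mF \otimes \mL_\chi)$; hence $c_v$ induces the identity on all these cohomology groups, whence $q_!(c_v) = \mathrm{id}$, and therefore $(\rho_{T,T'})_!(c_v) = \mathrm{id}$ as well. This gives the independence of the lift.

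Once independence is established, defining $a_{w'} := b_w$ is unambiguous, and it remains to verify that $\{a_{w'}\}$ is a $\rW'$-equivariant structure, i.e. that $a_{w'_1 w'_2} = a_{w'_2} \circ (w'_2)^* a_{w'_1}$ (with the appropriate normalization $a_e = \mathrm{id}$). Here I would choose lifts $w_1, w_2 \in N_\rW(\rW_L)$ of $\rho_{\rW',\rW}(w'_1), \rho_{\rho_{\rW',\rW}}(w'_2)$; then $w_1 w_2$ is a lift of $\rho_{\rW',\rW}(w'_1 w'_2)$ since $\rho_{\rW',\rW}$ is a group homomorphism, and the cocycle identity for the original $\rW$-equivariant structure on $\mF$ (applied to $w_1, w_2$) pushes forward along $(\rho_{T,T'})_!$ to give exactly the cocycle identity for the $a_{w'}$, using the commutative diagram \eqref{lifting} to match up the base-change isomorphisms; independence of the lift guarantees this is well-defined regardless of the choices. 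The normalization $a_e = \mathrm{id}$ follows by taking $w = e$ as the lift of the identity. This step is essentially formal bookkeeping once the key input — triviality of $(\rho_{T,T'})_!(c_v)$ via centrality — is in hand.
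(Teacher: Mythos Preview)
Your overall strategy coincides with the paper's: reduce to showing that the $\rW_L$-action on $\mF'=(\rho_{T,T'})_!\mF$ is trivial, and detect this via compactly supported cohomology twisted by Kummer local systems, using centrality of $\mF$. Two steps, however, need repair.

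\emph{First}, your justification that $v\in\rW^\circ_\chi$ for $\chi=\rho_{T,T'}^*\chi'$ is muddled: you write ``$v$ lies in $(\rW'_{\chi'})^\circ$'', but $v\in\rW_L\subset\rW$, not in $\rW'$. One can salvage the appeal to Lemma~\ref{z-extension} by taking $w'=e$ (then any $v\in\rW_L$ is a lift of $\rho_{\rW',\rW}(e)=e$, and trivially $e\in(\rW'_{\chi'})^\circ$), but the paper proceeds more directly and transparently: for a simple reflection $s_\alpha\in\rW_L$ (with $\alpha$ a root of $L$), the composite $\rho_{T,T'}\circ\check\alpha:\bG_m\to T'$ factors through $Z$, and by the description of $X^*(Z)$ as the characters of $T$ orthogonal to the coroots of $L$ one gets $(\check\alpha)^*\rho_{T,T'}^*\mL_{\chi'}\is\barQ$. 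Hence $s_\alpha\in\rW^\circ_\chi$ by definition, and since simple reflections generate $\rW_L$ this gives $\rW_L\subset\rW^\circ_\chi$.

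\emph{Second}, the implication ``$c_v$ induces the identity on every $H^*_c(T,\mF\otimes\mL_\chi)$, hence $q_!(c_v)=\mathrm{id}$'' is a genuine gap as written. Conservativity of the Mellin transform \cite[Prop.~3.4.5]{GL} reflects \emph{isomorphisms}, not equality of endomorphisms; knowing that an endomorphism of $\mF'$ induces the identity on all fibres of the Mellin transform does not by itself force it to equal the identity in $D(T')$. The paper avoids this by working one simple reflection at a time: since $s_\alpha^2=1$, one has a splitting $\mF'=\mF'^{s_\alpha=1}\oplus\mF'^{s_\alpha=-1}$, and triviality of the $s_\alpha$-action is equivalent to the \emph{vanishing of the object} $\mF'^{s_\alpha=-1}$, which is exactly what Mellin conservativity detects. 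Your argument for general $v\in\rW_L$ can be made to work by the same mechanism: decompose $\mF'$ into $\rW_L$-isotypic summands (we are over $\barQ$) and use Mellin conservativity to show each non-trivial isotypic piece is zero. Once this is in place, the cocycle verification is indeed formal, as you say.
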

\begin{proof}
Note that the map $\rho_{T,T'}:T\to T'$ is 
$\rW_L$-equivariant, where $\rW_L$-acts trivially on $T'$,
and 
it suffices to show that  
the $\rW_L$-action on $\mF'=(\rho_{T,T'})_!(\mF)$ is trivial. For this, it is enough to show that for any
simple reflection $s_\alpha$ in $\rW_L$ 
the $(-1)$-summand 
$\mF'^{s_\alpha=-1}=0$ is zero.
By the conservativity of Mellin transfrom \cite[Prop 3.4.5]{GL}, we reduce to show that 
\[H^*_c(T',\mF'^{s_\alpha=-1}\otimes\mL)=0\]
for any tame local system $\mL$ on $T'$. Note that the projection formula induces a $\rW_L$-equivariant isomorphism
\begin{equation}\label{projection}
    H^*_c(T,\mF\otimes\rho_{T,T'}^*\mL)\is H^*_c(T,(\rho_{T,T'})_!\mF\otimes\mL)\is H^*_c(T',\mF'\otimes\mL)
\end{equation}
On the other hand, 
by \cite[Section 2.15]{S}, the pull back along the surjection 
$T\to Z$ induces an isomorphism between  
\begin{equation}\label{char of Z}
    X^*(Z)\is\{\lambda\in X^*(T)|\langle\lambda,\check\alpha\rangle=0\ \ \text{for any root $\alpha$ of $L$}\}
\end{equation}
Since $\rho_{T,T'}:T\to T'$ factors through $T\to Z$, we see that 
for any root $\alpha$ of $L$ we have
\[(\check\alpha)^*\rho_{T,T'}^*\mL\is\bar{\mathbb Q}_\ell\]
and the assumption that $\mF$ is central implies the desired vanishing 
\[H^*_c(T',\mF'^{s_\alpha=-1}\otimes\mL)\is 
H^*_c(T',\mF'\otimes\mL)^{s_\alpha=-1}\is H^*_c(T,\mF\otimes\rho_{T,T'}^*\mL)^{s_\alpha=-1}=0\]
\end{proof}

\begin{lemma}\label{central}
Let $\mF\in D^\circ_W(T)$ be a $\rW$-equivariant central complex
and let $\mF'=(\rho_{T,T'})_!\mF\in D_{W'}(T')$ be 
the $\rW'$-equivariant complex as in Lemma \ref{W-equ}. 
Then $\mF'\in D^\circ_{W'}(T')$ is central. If $\mF\in D^{st}_W(T)$ is strongly central, then $\mF'\in D^{st}_{W'}(T')$
is also strongly central.
\end{lemma}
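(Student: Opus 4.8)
The plan is to verify the defining cohomological condition for (strong) centrality of $\mF'=(\rho_{T,T'})_!\mF$ directly, by reducing cohomology on $T'$ to cohomology on $T$ via the projection formula, exactly as in the proof of Lemma \ref{W-equ}. Fix a tame character $\chi'\in\calC(T')(\barQ)$ and set $\chi=\rho_{T,T'}^*\chi'$, a tame character of $T$. The projection formula gives a canonical isomorphism $H^*_c(T',\mF'\otimes\mL_{\chi'})\is H^*_c(T,\mF\otimes\mL_\chi)$ as in ~\eqref{projection}. The first thing to check is that this isomorphism is equivariant for the relevant Weyl stabilizer actions: given $w'\in\rW'_{\chi'}$ and a lift $w\in N_\rW(\rW_L)$ of $\rho_{\rW',\rW}(w')$, Lemma \ref{z-extension}(i) gives $w\in\rW_\chi$, and one checks that the action of $w'$ on $H^*_c(T',\mF'\otimes\mL_{\chi'})$ built from the $\rW'$-structure $a_{w'}$ on $\mF'$ (Lemma \ref{W-equ}) and the natural structure on $\mL_{\chi'}$ matches, under the projection-formula isomorphism, the action of $w$ on $H^*_c(T,\mF\otimes\mL_\chi)$ coming from the $\rW$-structure on $\mF$ and the natural structure on $\mL_\chi=\rho_{T,T'}^*\mL_{\chi'}$. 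This compatibility is essentially formal, since $a_{w'}=b_w$ is by definition obtained from the $\rW$-structure on $\mF$ together with the base change $(w')^*(\rho_{T,T'})_!\is(\rho_{T,T'})_!w^*$, which is exactly the geometric input into the projection-formula isomorphism.

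Granting that compatibility, the two statements follow by a group-theoretic translation of the hypotheses. For the central case: if $w'\in(\rW'_{\chi'})^\circ$, then Lemma \ref{z-extension}(ii) gives $w\in\rW^\circ_\chi$; since $\mF$ is central, $w$ acts trivially on $H^*_c(T,\mF\otimes\mL_\chi)$, hence $w'$ acts trivially on $H^*_c(T',\mF'\otimes\mL_{\chi'})$. As $(\rW'_{\chi'})^\circ$ is generated by the reflections $s_\alpha$ with $(\check\alpha)^*\mL_{\chi'}\is\barQ$, and each such $s_\alpha$ lifts appropriately, triviality of the generators gives triviality of the whole group $(\rW'_{\chi'})^\circ$; thus $\mF'$ is central. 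For the strongly central case: here we need that the \emph{full} stabilizer $\rW'_{\chi'}$ acts trivially on $H^*_c(T',\mF'\otimes\mL_{\chi'})$. Given $w'\in\rW'_{\chi'}$, Lemma \ref{z-extension}(i) gives a lift $w\in\rW_\chi$; since $\mF$ is strongly central, $\rW_\chi$ acts trivially on $H^*_c(T,\mF\otimes\mL_\chi)$, so again $w'$ acts trivially. One small point to address is that for a given $w'$ there may be several lifts $w$ (differing by $\rW_L$), but this is harmless: all of them lie in $\rW_\chi$ (resp.\ $\rW^\circ_\chi$), and by Lemma \ref{W-equ} the induced action on $\mF'$, hence on the cohomology, is independent of the choice.

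The main obstacle I expect is the bookkeeping in the equivariance of ~\eqref{projection} for the stabilizer actions, i.e.\ making precise that the $\rW'_{\chi'}$-action on $H^*_c(T',\mF'\otimes\mL_{\chi'})$ induced by $(a_{w'})$ corresponds under the projection formula to the $\rW_\chi$-action on $H^*_c(T,\mF\otimes\mL_\chi)$ induced by the $\rW$-structure on $\mF$. This requires unwinding the definition of $a_{w'}=b_w$ together with the compatibility of base change with the projection formula, and tracking how the Kummer local systems $\mL_{\chi'}$ and $\mL_\chi=\rho_{T,T'}^*\mL_{\chi'}$ carry their canonical stabilizer-equivariant structures through the pullback; once this diagram chase is in place, the rest is the purely formal deduction above using Lemma \ref{z-extension}. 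A secondary point worth stating explicitly is that $(\rho_{T,T'})_!\mF$ does land in $D_{\rW'}(T')$ (rather than merely being $\rW'$-equivariant up to coherence), but this is exactly the content of Lemma \ref{W-equ}, so we may simply cite it.
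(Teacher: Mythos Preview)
Your proposal is correct and follows essentially the same approach as the paper: both reduce via the projection-formula isomorphism~\eqref{projection} and then invoke Lemma~\ref{z-extension} to match the actions of $w'$ and its lift $w$ on the two cohomology groups, concluding that triviality for $\mF$ forces triviality for $\mF'$. The paper packages the equivariance you describe into a single commutative square induced by diagram~\eqref{lifting}, but the content is the same; your additional remarks on generators of $(\rW'_{\chi'})^\circ$ and on independence of the lift are harmless elaborations.
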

\begin{proof}
Let $\chi'$, $\chi$,
$w'$, and $ w$
be as in Lemma \ref{z-extension}.
Assume $w'\in\rW_{\chi'}$ (resp. $w'\in\rW^\circ_{\chi'})$. Then
 Lemma \ref{z-extension} implies 
 $w\in\rW_\chi$ (resp. $ w\in\rW^\circ_\chi$)
 and diagram~\eqref{lifting}
induces a commutative diagram
\[\xymatrix{H_c^*(T,\mF\otimes\mL_{\chi})\ar[r]^{\simeq}\ar[d]^{w^*}&H_c^*(T',\mF'\otimes\mL_{\chi'})\ar[d]^{(w')^*}\\
H_c^*(T,\mF\otimes\mL_{\chi})\ar[r]^{\simeq}&H_c^*(T',\mF'\otimes\mL_{\chi'})}\]
where the horizontal map is the isomorphism induced by the projection formula~\eqref{projection}. If $\mF$  central (resp. strongly central) then
$w^*$ is the identity and the diagram above implies 
$(w')^*$ is the identity
and hence $\mF'$ is central (resp. strongly central). The lemma follows.

\end{proof}

Combining Lemma \ref{W-equ} and Lemma \ref{central} we obtain
\begin{thm}\label{transfer:central complexes} 
Given a morphism 
$\hat\rho:\hat G'\to\hat G$,  
the dual morphism $\rho_{T,T'}:T\to T'$ in~\eqref{dual map} induces a monoidal functors
\[\Phi_{\hat\rho}:=(\rho_{T,T'})_!: D^\circ_\rW(T)\to D^\circ_{\rW'}(T')\ \  (resp.\ \ \  \Phi_{\hat\rho}=(\rho_{T,T'})_!:D^{st}_\rW(T)\to D^{st}_{\rW'}(T'))\]
between the category of $\rW$-equivariant central complexes (resp. strongly central complexes)
which is functorial with respect to compositions of maps between dual groups, that is, 
for any pair of morphisms 
    $\hat\rho':\hat G^{''}\to\hat G'$ and 
    $\hat\rho:\hat G'\to\hat G$
    there is a canoincal isomorphism of monoidal fucntors
    $\Phi_{\hat\rho'}\circ\Phi_{\hat\rho}\is\Phi_{\hat\rho\circ\hat\rho'}$.
\end{thm}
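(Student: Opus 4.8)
The plan is to assemble Theorem~\ref{transfer:central complexes} from the two preceding lemmas together with two additional checks: monoidality of $(\rho_{T,T'})_!$ and the compatibility with composition. By Lemma~\ref{W-equ}, for a central $\mF$ the complex $\mF'=(\rho_{T,T'})_!\mF$ carries a canonical $\rW'$-equivariant structure $\{a_{w'}\}$, independent of the choice of lift $w\in N_\rW(\rW_L)$; by Lemma~\ref{central}, $\mF'$ is again central, and strongly central when $\mF$ is. So the first step is simply to record that these two lemmas produce a well-defined assignment $\mF\mapsto \mF'$, and then to upgrade it to a functor: a morphism $f:\mF_1\to\mF_2$ in $D^\circ_\rW(T)$ pushes forward to $(\rho_{T,T'})_!f:\mF_1'\to\mF_2'$, and one checks this intertwines the induced equivariant structures $a_{w'}$, which is immediate from the naturality of base change and the construction of $b_w$ in the formula displayed before Lemma~\ref{W-equ}.

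Second, I would verify monoidality. Here the key geometric input is that $\rho_{T,T'}:T\to T'$ is a group homomorphism, so the multiplication squares for $T$ and $T'$ commute with $\rho_{T,T'}$; base change then gives a canonical isomorphism $(\rho_{T,T'})_!(\mF_1\star\mF_2)\is (\rho_{T,T'})_!\mF_1 \star (\rho_{T,T'})_!\mF_2$ together with the analogous statement for the units (the skyscraper at $e$ pushes to the skyscraper at $e$). One must then check these isomorphisms are $\rW'$-equivariant with respect to the structures produced by Lemma~\ref{W-equ}; this reduces, via the independence-of-lift statement, to compatibility on the level of the lifts $w\in N_\rW(\rW_L)$, where it follows from the hexagon/associativity constraints for the $\star$-monoidal structure on $D_\rW(T)$ and the fact that $w$ acts by a monoidal automorphism. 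This is the step I expect to be the most bookkeeping-heavy: one is juggling the base-change isomorphisms, the $\rW$- and $\rW'$-equivariant structures, and the associativity/unit constraints simultaneously, and the cleanest route is probably to observe that $\mF\mapsto (\rho_{T,T'})_!\mF$ is already a monoidal functor $D_\rW(T)^{\text{na\"ive}}\to D(T')$ before any equivariance is imposed (this is formal from $\rho_{T,T'}$ being a homomorphism and proper/arbitrary base change), and that the content of Lemma~\ref{W-equ} and Lemma~\ref{central} is precisely that it restricts to a functor of \emph{$\rW'$-equivariant} objects landing in the central subcategory.

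Third, the functoriality in $\hat\rho$. Given $\hat\rho':\hat G''\to\hat G'$ and $\hat\rho:\hat G'\to\hat G$, I would first check at the level of tori that $\rho_{T',T''}\circ\rho_{T,T'}=\rho_{T,T''}$: this is pure torus duality applied to $\hat\rho'\circ\hat\rho$ restricted to $\hat T''$, where one only has to see that the composite of the two factorizations $\hat T''\twoheadrightarrow\hat Z'\hookrightarrow\hat T'$ and $\hat T'\twoheadrightarrow\hat Z\hookrightarrow\hat T$ agrees with the factorization of $\hat\rho\circ\hat\rho'|_{\hat T''}$ — a statement about images of subtori, hence straightforward. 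Then $(\rho_{T,T''})_!\is(\rho_{T',T''})_!\circ(\rho_{T,T'})_!$ by the standard isomorphism $(g\circ f)_!\is g_!\circ f_!$. The only real point is that the $\rW''$-equivariant structure produced by applying Lemma~\ref{W-equ} twice (first transferring along $\hat\rho$, then along $\hat\rho'$) matches the one produced by applying it once for $\hat\rho\circ\hat\rho'$; this again follows from the independence-of-lift assertion, since a lift of $\rho_{\rW'',\rW}(w'')$ in $N_\rW(\rW_L)$ can be chosen compatibly with a lift of $\rho_{\rW'',\rW'}(w'')$ in $N_{\rW'}(\rW_{L'})$ — one uses that $\rho_{\rW'',\rW}=\rho_{\rW',\rW}\circ\rho_{\rW'',\rW'}$ at the level of the normalizer-quotient groups. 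Assembling: $\Phi_{\hat\rho'}\circ\Phi_{\hat\rho}\is\Phi_{\hat\rho\circ\hat\rho'}$ as monoidal functors, with the isomorphism coming from $(g\circ f)_!\is g_!\circ f_!$, and the cocycle/coherence condition for triple composites follows from the corresponding coherence for $(\cdot)_!$. The main obstacle throughout is not any single hard idea but keeping the equivariance structures straight; the independence-of-lift statement of Lemma~\ref{W-equ} is what makes every compatibility check collapse to a statement about a single chosen lift.
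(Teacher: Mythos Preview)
Your proposal is correct and follows the same approach as the paper: the paper's proof is literally the single sentence ``Combining Lemma~\ref{W-equ} and Lemma~\ref{central} we obtain,'' so the monoidality and composition-functoriality checks you spell out are exactly the routine verifications the paper leaves implicit. Your more careful treatment of these points (pushforward along a group homomorphism is monoidal for $!$-convolution; the composite of the dual torus maps is the dual torus map for the composite; the independence-of-lift from Lemma~\ref{W-equ} makes the two-step and one-step $\rW''$-structures agree) is sound and adds nothing beyond what the paper takes for granted.
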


\begin{example}\label{gamma sheaves on T}
Consider the case when $\hat\rho:\hat G'\to \hat G=GL_n$ 
is a representation $\hat G'$. Let $\on{tr}_T^*\mL_\psi\otimes\on{sign}_\rW\in D_\rW^\circ(T)$ be the central complex in Example \ref{example of central compelxes} (2) for $G=GL_n$.
The transfer $\Phi_{T',\hat\rho}=\Phi_{\hat\rho}(\on{tr}_T^*\mL_\psi\otimes\on{sign}_\rW)\in D_{\rW'}^\circ(T')$ is the $\gamma$-sheaves on $T'$  in \cite{BK}.
\end{example}

\quash{
\subsection{Transfer for $N$-equivariant central complexes}
The dual moprhim 
$\rho_{T,T'}:T\to T'$
induces a morphism 
$[\rho_{T,T'}]:\frac{T}{T}\to\frac{T'}{T'}$
{\red (the map is not representable)} and 
we have a similar commutative diagram 
\begin{equation}\label{lifting:stack}
\xymatrix{\frac{T}{T}\ar[r]^{[\rho_{T,T'}]}\ar[d]^{\tilde w}&\frac{T'}{T'}\ar[d]^{w'}\\
\frac{T}{T}\ar[r]^{[\rho_{T,T'}]}&\frac{T'}{T'}}.
\end{equation}
Note that there is an equivalence between $D_N(T)
$ and the category 
$D_\rW(\frac{T}{T})$ of 
$\rW$-equivariant complexes on the quoteint stack $\frac{T}{T}$ (see, e.g., \cite[Section 6.1]{LL1}).
Let  $\mF\in D_N(T)\is D_\rW(\frac{T}{T})$
and let $\mF'=[\rho_{T,T'}]_!\mF\in D(\frac{T'}{T'})$.
The diagram~\eqref{lifting:stack} gives rise to an isomorphism 
\[[b_{\tilde w}]:(w')^*\mF'\is 
(w')^*[\rho_{T,T'}]_!(\mF)\is(\rho_{T,T'})_!(\tilde w)^*\mF\is [\rho_{T,T'}]_!\mF\is\mF'.\]

\begin{lemma}\label{N-equ}
Let $\mF\in C_N(T)$ is a  $N$-equivaraint central complex on $T$.

(1)
The isomorphism $[b_{\tilde w}]$
above depends only on $w'$ (that is, it
is independent of the choice of the lift $\tilde w$) and the collection 
$\{a_{w'}:=b_{\tilde w}:(w')^*\mF'\is\mF'\}_{w'\in\rW' }$
defines a $\rW'$-equivariant structure on $\mF'$. 

(2)
let $\mF'=[\rho_{T,T'}]_!\mF\in D_{N'}(T')\is D_{\rW'}(\frac{T}{T'})$ be 
the $N'$-equivariant complex as in (1). 
Then $\mF'\in C_{N'}(T')$ is central. If $\mF\in S_N(T)$ is strongly central, then $\mF'\in S_{N'}(T')$
is also strongly central.
\end{lemma}
\begin{proof}
Similar to the case of $\rW$-equivariant 
central complexes, we need to show that for any
simple reflection $s_\alpha$ in $\rW_L$ 
the $(-1)$-summand 
$\mF'^{s_\alpha=-1}=0\in D(\frac{T'}{T'})$ is zero. For this we observe that  the forgetful functor $\on{For}:D(\frac{T'}{T'})\to D(T')$ is conservative, and hence $\on{For}(\mF'^{s_\alpha=-1})\is 
\on{For}(\mF')^{s_\alpha=-1}=0$
by Lemma \ref{W-equ}.

For Part ii),
we need to show 
for any $\mF\in C_N(T)$ (resp. $S_N(T)$), the image under the forgetful functor
$\on{For}(\mF')\is (\rho_{T,T'})_!(\on{For}(\mF'))\in D_\rW(T')$
is $\rW$-equivaraint central {\red (Not quite true)} (resp. $\rW$-equivaraint storngly central) and this 
follows from Lemma \ref{central}

\end{proof}

Combining Theorem \ref{transfer:central complexes} and 
Lemma \ref{N-equ} we obtain:
\begin{thm}\label{transfer:N central complexes} 
Given a morphism 
$\hat\rho:\hat G'\to\hat G$,  
the dual morphism $\rho_{T,T'}:T\to T'$ in~\eqref{dual map} induces a monoidal functor
\[\on{Trans}_{\rho,N}:=[\rho_{T,T'}]_!: C_N(T)\to C_{N'}(T')\]
\[(resp.\ \ \ \  \on{Trans}_{\rho,N}=[\rho_{T,T'}]_!:S_N(T)\to S_{N'}(T'))\]
between the category of $N$-equivaraint central complexes (resp. strongly central complexes). 
%Moreover, we have the following commuatative diagram
%\[\xymatrix{C_N(T)\ar[r]^{\on{Trans}_{\rho,N}}\ar[d]&C_{N'}
%(T')\ar[d]\\
%C_\rW(T)\ar[r]^{\on{Trans}_{\rho,\rW}}&C_{\rW'}(T')}\]
%where the vertical arrows are the forgetful functors.
\end{thm}
}

\subsection{Transfer for bi-Whittaker categories}
Let $\mL_\psi$ be a non-degenerate multiplicative local system on $U$
and let $\on{Wh}(G)=D_{U\times U.\mL_\psi\boxtimes\mL_\psi}(G)$ be the bi-Whittaker category of 
bounded $(U,\mL_\psi)$ bi-equivariant 
constructible complexes on $G$.
By \cite[Theorem 1.4]{BT}, there is a $t$-exact monoidal equivalence 
\[\xi_G:\on{Wh}_G\is D^\circ_\rW(T).\]
Then Theorem \ref{transfer:central complexes} implies the following
\begin{thm}\label{transfer:whittaker}
Given a morphism 
$\hat\rho:\hat G'\to\hat G$ between dual groups, the composition
\[\on{Wh}_{\hat\rho}={\xi_{G'}}^{-1}\circ\Phi_{\hat\rho}\circ\xi_G:\on{Wh}_G\to\on{Wh}_{G'}\]
define a monoidal transfer functor between 
bi-Whittaker categories
which is functorial with respect to compositions of maps between dual groups, that is, 
for any pair of morphisms 
    $\hat\rho':\hat G^{''}\to\hat G'$ and 
    $\hat\rho:\hat G'\to\hat G$
    there is a canoincal isomorphism of monoidal fucntors
    $\on{Wh}_{\hat\rho'}\circ\on{Wh}_{\hat\rho}\is\on{Wh}_{\hat\rho\circ\hat\rho'}: \on{Wh}_{G}\to \on{Wh}_{G''}$

\end{thm}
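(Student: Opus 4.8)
The plan is to deduce Theorem \ref{transfer:whittaker} formally from Theorem \ref{transfer:central complexes} together with the Bezrukavnikov--Deshpande equivalence $\xi_G:\on{Wh}_G\is D^\circ_\rW(T)$ of \cite[Theorem 1.4]{BT}. Since $\xi_G$ is a monoidal equivalence (and $t$-exact, though that is not needed for this statement), and $\Phi_{\hat\rho}:D^\circ_\rW(T)\to D^\circ_{\rW'}(T')$ is a monoidal functor by Theorem \ref{transfer:central complexes}, the composite $\on{Wh}_{\hat\rho}=\xi_{G'}^{-1}\circ\Phi_{\hat\rho}\circ\xi_G$ is a composition of monoidal functors, hence monoidal. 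So the only real content is the functoriality in $\hat\rho$, i.e. the canonical isomorphism $\on{Wh}_{\hat\rho'}\circ\on{Wh}_{\hat\rho}\is\on{Wh}_{\hat\rho\circ\hat\rho'}$.

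First I would unwind the left-hand side: by definition
\[
\on{Wh}_{\hat\rho'}\circ\on{Wh}_{\hat\rho}
=\bigl(\xi_{G''}^{-1}\circ\Phi_{\hat\rho'}\circ\xi_{G'}\bigr)\circ\bigl(\xi_{G'}^{-1}\circ\Phi_{\hat\rho}\circ\xi_{G}\bigr),
\]
and the two inner occurrences of $\xi_{G'}$ cancel: $\xi_{G'}\circ\xi_{G'}^{-1}\is\on{id}$. This leaves $\xi_{G''}^{-1}\circ\Phi_{\hat\rho'}\circ\Phi_{\hat\rho}\circ\xi_G$. Now invoke the functoriality isomorphism $\Phi_{\hat\rho'}\circ\Phi_{\hat\rho}\is\Phi_{\hat\rho\circ\hat\rho'}$ from Theorem \ref{transfer:central complexes}; whiskering it by $\xi_{G''}^{-1}$ on the left and $\xi_G$ on the right produces the desired isomorphism
\[
\on{Wh}_{\hat\rho'}\circ\on{Wh}_{\hat\rho}\is\xi_{G''}^{-1}\circ\Phi_{\hat\rho\circ\hat\rho'}\circ\xi_G=\on{Wh}_{\hat\rho\circ\hat\rho'}.
\]
One should check that this isomorphism is compatible with the monoidal structures — this is automatic because every functor in sight is monoidal and the isomorphism $\Phi_{\hat\rho'}\circ\Phi_{\hat\rho}\is\Phi_{\hat\rho\circ\hat\rho'}$ is already an isomorphism of monoidal functors by Theorem \ref{transfer:central complexes}, and whiskering by the monoidal functors $\xi_{G''}^{-1}$, $\xi_G$ preserves this.

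The one point that genuinely requires care, and which I expect to be the main (minor) obstacle, is the cocycle/associativity coherence: for a triple of composable morphisms $\hat G'''\to\hat G''\to\hat G'\to\hat G$ one wants the two ways of building the isomorphism $\on{Wh}_{\hat\rho''}\circ\on{Wh}_{\hat\rho'}\circ\on{Wh}_{\hat\rho}\is\on{Wh}_{\hat\rho\circ\hat\rho'\circ\hat\rho''}$ to agree. This reduces, after cancelling adjacent $\xi$'s and whiskering, to the analogous coherence for the family $\{\Phi_{\hat\rho}\}$, which is part of the functoriality statement already established in Theorem \ref{transfer:central complexes} (there the underlying maps $\rho_{T,T'}$ compose strictly, $\rho_{T',T''}\circ\rho_{T,T'}=\rho_{T,T''}$, so the pushforwards $(\rho_{T,T'})_!$ compose coherently, and one only has to track the $\rW$-equivariant structures, which is done in Lemma \ref{W-equ}). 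Hence the coherence is inherited with no new work. I would therefore present the proof as: (i) monoidality of $\on{Wh}_{\hat\rho}$ is clear since it is a composite of monoidal functors; (ii) the functoriality isomorphism is obtained by the cancellation-and-whiskering argument above; (iii) the higher coherence follows formally from that of $\Phi_{\hat\rho}$.
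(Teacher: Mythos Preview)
Your proposal is correct and matches the paper's approach exactly: the paper simply states that Theorem~\ref{transfer:central complexes} together with the Bezrukavnikov--Deshpande equivalence $\xi_G$ implies Theorem~\ref{transfer:whittaker}, with no further argument given. Your cancellation-and-whiskering argument is the obvious (and only) thing to do, and your discussion of coherence actually supplies more detail than the paper does.
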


\subsection{Transfer for vanishing complexes}
For any parabolic subgroup 
$P$ with a Levi subgroup $L$
we have the  restriction functor
$\on{Res}_{L,P}^G=   q_!p^*:D_G(G)\to D_L(L)$, where 
\[\frac{G}{G}\stackrel{p}\longleftarrow\frac{P}{P}\stackrel{q}\longrightarrow\frac{L}{L}.\]
It admits a right adjoint 
$\on{Ind}_{L\subset P}^G:D_L(L)\to D_G(G)$
called the induction functor.
We also have the Harish-Chandra functor
\[\on{HC}=a_!\circ\on{For}:D_G(G)\longrightarrow 
D_B(G)\Longrightarrow D_B(G/U)\]
where $a:\frac{G}{B}\to \frac{G/U}{B}$
is the natural quotient map.

\begin{definition}
    A complex $\mF\in D_G(G)$ is a called a \emph{vanishing complex}
    if $\on{HC}(\mF)$ is supported on 
    $T=B/U\subset G/U$.
    We denote by 
    $D^\circ_G(G)$ the category of vanishing complexes
    and $D^\circ_G(G)^{\heartsuit}$
    the category of vanishing perverse sheaves.
\end{definition}

Vanishing complexes were  introduced and studied in \cite{C1,C2}
 on the Braverman-Kazhdan conjecture.
The main result in \emph{loc. cit.} implies  that 
 for any  $N$-equivaraint 
 central perverse  sheaf
 $\mF\in D^\circ_N(T)^{\heartsuit}\subset D_\rW(\frac{T}{T})^\heartsuit$ on $T$, its induction 
 $\on{Ind}_{T\subset B}^G(\mF)$
 carries a $\rW$-action and the $\rW$-invariant summand 
$\on{Ind}_{T\subset B}^G(\mF)^\rW\in D^\circ_G(G)^{\heartsuit}$ is a vanishing perverse sheaf. 
It was conjectured in \cite[Conjecture 1.9.]{C2} that
the resulting functor
\[\on{Ind}_{T\subset B}^G(-)^\rW:C_N(T)^\heartsuit\to C_G(G)^\heartsuit\]
 is an equivalence of abelian category.  
 The conjecture was proved
in \cite{BT}. In fact, they proved a more strong results at the level of triangulated categories:
\begin{thm}\cite[Corollary 3.2]{BT}\label{ind_N res _N}
(1)
There is natural $\rW$-action on 
the functor 
$\on{Ind}_{T\subset B}^G\circ\on{For}:D^\circ_N(T)\subset D_N(T)\to D_T(T)\to D_G(G)$
and the taking $\rW$-invariant factor 
defines a $t$-exact braided monoidal equivalence 
\[\on{Ind}_N:=\on{Ind}_{T\subset B}^G(-)^\rW\circ\on{For}: D^\circ_N(T)\stackrel{\simeq}\longrightarrow D^\circ_G(G)\]
between $N$-equivariant central complexes on $T$
and the category of vanisihing complexes on $G$.

(2) 
For any vanishing complex $\mF\in D^\circ_G(G)$, the restriction  
$\mF_T=\on{Res}_{T\subset B}^G(\mF)\in D_T(T)$ carries a natural 
$\rW$-equivariant structure. Moreover, the resluting 
$N$-equivaraint complex
$\mF_T'\in D^\circ_N(T)\subset D_N(T)$ is central and the assignment 
$\mF\to \mF_T'$ defines an inverse equivalence
\[\on{Res}_N:D^\circ_G(G)\stackrel{\simeq}\longrightarrow D^\circ_N(T)\]
of $\on{Ind}_\rW$.
\end{thm}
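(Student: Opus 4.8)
\textbf{Plan of proof for Theorem \ref{ind_N res _N}.}

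The statement attributes the result to \cite[Corollary 3.2]{BT}, so the "proof" I would give is a derivation of the corollary from the equivalence $\xi_G:\on{Wh}_G\is D^\circ_\rW(T)$ (i.e.\ \cite[Theorem 1.4]{BT}) together with the standard relation between the bi-Whittaker category and the adjoint-equivariant category on $G$. The plan is as follows. First I would recall that the Harish-Chandra bimodule functor, or equivalently the ``averaging'' functor $\on{Av}^{\psi}:D_G(G)\to\on{Wh}_G$ defined by pushing forward along $\frac{G}{G}\leftarrow \frac{U\bs G/U}{T}\to\operatorname{pt}/(U\times U,\mL_\psi\boxtimes\mL_\psi)$, restricts to an equivalence when source and target are cut down appropriately; the category $D^\circ_G(G)$ of vanishing complexes is \emph{precisely} the subcategory on which $\on{HC}$, hence $\on{Av}^\psi$, behaves like an equivalence, since the vanishing condition says $\on{HC}(\mF)$ is concentrated on the torus $T\subset G/U$. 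So the first key step is: identify $D^\circ_G(G)$ with $\on{Wh}_G$ via $\on{Av}^\psi$ (with inverse a suitable de-averaging / Gelfand--Graev type functor), $t$-exactly and monoidally.

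The second step is to identify the composite $\xi_G\circ\on{Av}^\psi: D^\circ_G(G)\to D^\circ_\rW(T)$, respectively its inverse, with the functors $\on{Res}_N$ and $\on{Ind}_N$ of the statement. For $\on{Res}_N$: one checks that restricting a vanishing complex along $\frac{T}{T}\hookrightarrow\frac{B}{B}\to\frac{G}{G}$ (i.e.\ $q_!p^*$ for $P=B$, $L=T$) agrees, up to shift, with the Mellin/Whittaker picture — this is essentially because $\on{HC}(\mF)$ is supported on $T$, so the Harish-Chandra functor and the naive torus-restriction functor carry the same information, and the $\rW$-equivariant structure on $\on{Res}_{T\subset B}^G(\mF)$ comes from the Weyl-group action on the ``horocycle space'' $U\bs G/U$. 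For $\on{Ind}_N$: dually, $\on{Ind}_{T\subset B}^G$ followed by taking $\rW$-invariants is the right (and left, after the $t$-exactness is in place) adjoint, and one must check that it does land in $D^\circ_G(G)$ — i.e.\ that the induction of a central complex is a vanishing complex. That last point is where the hypothesis ``central'' is used: the vanishing of $\on{HC}\circ\on{Ind}_{T\subset B}^G(\mF)$ away from $T$ is controlled, via parabolic restriction to proper Levis and the Mellin transform, by the vanishing of $\mF^{s_\alpha=-1}$-type summands after twisting by Kummer sheaves $\mL_\chi$ with $(\check\alpha)^*\mL_\chi\is\barQ$ — which is exactly the definition of $D^\circ_\rW(T)$, and is used in the same way as in the proof of Lemma \ref{W-equ}.

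The third step is bookkeeping: braided monoidal compatibility and $t$-exactness. Monoidality of $\on{Ind}_N$ and $\on{Res}_N$ follows from monoidality of $\xi_G$ (given) plus the fact that convolution on $G$ restricts to convolution on $T$ under Harish-Chandra; the braiding comes from the $E_2$-structure on $D_G(G)$ (the adjoint category is braided) matching the one on $D^\circ_\rW(T)$ coming from the commutativity of $T$ and the compatibility with $\rW$. For $t$-exactness one invokes the $t$-exactness of $\xi_G$ from \cite[Theorem 1.4]{BT} and of $\on{Av}^\psi$ on the vanishing subcategory. Finally, that $\on{Ind}_N$ and $\on{Res}_N$ are mutually inverse: one is (say) the right adjoint of the other by construction, both are equivalences by the above, and the unit/counit are isomorphisms because they are so after applying the conservative Mellin transform on $T$.

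\textbf{Main obstacle.} The genuinely substantive point — and the one I expect to absorb most of the work — is the second step in the direction $\on{Ind}_N$: proving that the $\rW$-invariant part of $\on{Ind}_{T\subset B}^G\circ\on{For}$ of a central complex actually satisfies the vanishing (support-on-$T$) condition defining $D^\circ_G(G)$, i.e.\ that ``central'' is exactly the right hypothesis on the torus side to match ``vanishing'' on $G$. This requires computing $\on{HC}\circ\on{Ind}$ via parabolic restriction to all Levi subgroups and translating the resulting conditions, Kummer-sheaf by Kummer-sheaf, into the triviality of the reflection action on $H^*_c(T,\mF\otimes\mL_\chi)$ for the relevant $s_\alpha$ — precisely the mechanism already seen in Lemma \ref{W-equ} and Lemma \ref{central}, but now run in reverse and for a general Levi rather than a single reflection. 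Everything else is formal adjunction, Mellin-conservativity, and transport of structure along the given equivalence $\xi_G$.
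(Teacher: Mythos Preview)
The paper gives no proof of this statement: it is quoted as \cite[Corollary 3.2]{BT} with no further argument, so there is no ``paper's proof'' to compare against beyond the citation itself.

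That said, your plan has a concrete gap. In Step~2 you propose to identify the composite $\xi_G\circ\on{Av}^\psi:D^\circ_G(G)\to D^\circ_\rW(T)$ with $\on{Res}_N$, and its inverse with $\on{Ind}_N$. But $\on{Res}_N$ and $\on{Ind}_N$ have target and source $D^\circ_N(T)\simeq D^\circ_\rW(\frac{T}{T})$, not $D^\circ_\rW(T)$; the two categories differ by the datum of $T$-equivariance for the trivial conjugation action, which is genuine additional structure at the derived level. In the paper's notation, what one actually has (this is \cite[Proposition~5.2]{BT}, invoked in the proof of Theorem~\ref{Trans for vansihing}(3)) is $\xi_G\circ\on{Av}_\psi\simeq\on{Res}_\rW=\pi^*\circ\on{Res}_N$, and only the one-sided relation $\on{Res}_\rW\circ\on{Ind}_\rW\simeq\on{id}$ holds (see~\eqref{res-ind}); the other composite is not the identity because $\pi\circ f:\frac{T}{T}\to T\to\frac{T}{T}$ collapses the $BT$-factor. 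Hence $\on{Av}^\psi:D^\circ_G(G)\to\on{Wh}_G$ is \emph{not} an equivalence, and Step~1 of your plan fails. The ``$N$'' in $\on{Ind}_N$, $\on{Res}_N$ is not decorative: it records precisely the structure that is lost when one passes through the bi-Whittaker category, so the theorem cannot be recovered from $\xi_G$ alone by transport of structure.

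A secondary concern: the numbering in \cite{BT} and the way the present paper invokes \cite[Proposition~5.2, Corollary~5.4]{BT} only \emph{after} stating Theorem~\ref{ind_N res _N} suggest that in \cite{BT} the $\on{Ind}_N$/$\on{Res}_N$ equivalence is logically prior to $\xi_G$, so your deduction may also be circular.
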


\begin{remark}\label{sign}
(1) 
The $\rW$-action in
part (1) differs from the one in \cite{C2,LL1}
by the sign character of $\rW$.

(2) As observed in \cite{LL2}, the induction 
$\on{Ind}_{T\subset B}^G\circ\on{For}:D_N(T)\to D_T(T)\to D_G(G)$
on the whole category might not carry a 
$\rW$-action and the restriction 
$\on{Res}_{T\subset B}^G:D_G(G)\to D_T(T)$
does not admit a lifting to 
$D_N(T)$. The theorem above in particular says that, when restricts to the subcategories of 
central complexes or vanishing complexes, we do have a
canonical $\rW$-action and a lifting functor.

\end{remark}

Consider 
and following monoidal functors
\[\on{Ind}_\rW=\on{Ind}_N\circ f^*:D^\circ_\rW(T)\to D^\circ_N(T)\is D^\circ_G(G)\]
\[\on{Res}_\rW=\pi^*\on{Res}_N:D^\circ_G(G)\is D^\circ_N(T)\to D^\circ_\rW(T)\]
where 
$f:\frac{T}{T}\to T$ and $\pi:T\to\frac{T}{T}$
are the natural projection maps.
Note that Theorem \ref{ind_N res _N} implies
\begin{equation}\label{res-ind}
\on{Res}_\rW\circ\on{Ind}_\rW\is
\pi^*\on{Res}_N\circ\on{Ind}_N \circ f^*\is\pi^*f^*\is\on{id}.
\end{equation}

Motiviated by the work \cite{C3}, we introduce the notion of 
\emph{stable complexes} on $G$:

\begin{definition}\label{stabble complexes}
A vanishing complex $\mF\in D^\circ_G(G)$ is called a stable complex if its restriction $\on{Res}_\rW(\mF)\in D^{st}_\rW(T)\subset D^\circ_\rW(T)$ is strongly central.
We denote by $D^{st}_G(G)$ the full subcategory of stable complexes on $G$.
    
\end{definition}

\begin{thm}\label{Trans for vansihing}
Given a morphism $\hat\rho:\hat G'\to \hat G$ between dual groups,  the composition
\[\on{T}_{\hat\rho}:=\on{Ind}_{\rW'}\circ\Phi_{\hat\rho}\circ\on{Res}_\rW:D^\circ_G(G)\to D^\circ_\rW(T)\to D^\circ_{\rW'}(T')\ra D^\circ_{G'}(G')\]
defines a functorial monoidal transfer functor
between the categories of vanishing complexes  
satsifying the following properties
\begin{enumerate}
\item it restricts to a monoidal functor
$\on{T}_{\hat\rho}: D^{st}_G(G)\to D^{st}_{G'}(G')$
between stable complexes
    \item  
    for any pair of morphisms 
    $\hat\rho':\hat G^{''}\to\hat G'$ and 
    $\hat\rho:\hat G'\to\hat G$
    there is a canoincal isomorphism of monoidal fucntors
    $\on{T}_{\hat\rho}\circ\on{T}_{\hat\rho'}\is\on{T}_{\hat\rho\circ\hat\rho'}: D^\circ_{G^{}}(G^{})\to D^{\circ}_{G''}(G'')$
    \item 
    the averaging fucntor
\[\on{Av}_{\psi}:D^{\circ}_G(G)\to\on{Wh}_G\ \ \ (resp.
\ \ \on{Av}_{\psi'}:D^{\circ}_{G'}(G')\to\on{Wh}_{G'})\]
    with respect to 
    a  non-degenerate local system 
    $\mL_\psi$ 
    on $U$ (resp.  $\mL_{\psi'}$ on $U'$),    
    fits into the following commutative diagram
\[\xymatrix{D^{\circ}_G(G)\ar[r]^{\on{T}_{\hat\rho}}\ar[d]^{\on{Av}_{\psi}}&D^\circ_{G'}(G')\ar[d]^{\on{Av}_{\psi'}}\\
\on{Wh}_G\ar[r]^{\on{Wh}_{\hat\rho}}&\on{Wh}_{G'}}.\]
\end{enumerate}

\end{thm}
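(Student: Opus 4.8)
The plan is to deduce every assertion formally from the three monoidal functors out of which $\on{T}_{\hat\rho}$ is assembled, using only the identity $\on{Res}_\rW\circ\on{Ind}_\rW\is\on{id}$ of~\eqref{res-ind}, Theorem~\ref{transfer:central complexes}, and a single substantive input about the averaging functor. First I would record that $\on{T}_{\hat\rho}$ is a composite of monoidal functors: $\Phi_{\hat\rho}=(\rho_{T,T'})_!$ is monoidal by Theorem~\ref{transfer:central complexes}, $\on{Res}_N$ and $\on{Ind}_{N'}$ are monoidal equivalences by Theorem~\ref{ind_N res _N}, and the pullback functors $\pi^*$ (forgetting $T$-equivariance) and $(f')^*$ are monoidal for the $!$-convolution; these theorems also guarantee that each factor carries the relevant subcategory into the next, so that $\on{T}_{\hat\rho}\colon D^\circ_G(G)\to D^\circ_{G'}(G')$ is well defined and monoidal.

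Properties (1) and (2) are then pure diagram chases. For (1), I would apply~\eqref{res-ind} with $G'$ in place of $G$ to obtain a canonical isomorphism $\on{Res}_{\rW'}\circ\on{T}_{\hat\rho}\is\Phi_{\hat\rho}\circ\on{Res}_\rW$; if $\mF\in D^{st}_G(G)$ then $\on{Res}_\rW(\mF)$ is strongly central by Definition~\ref{stabble complexes}, hence so is $\Phi_{\hat\rho}(\on{Res}_\rW(\mF))$ by the ``strongly central'' case of Theorem~\ref{transfer:central complexes}, and therefore $\on{T}_{\hat\rho}(\mF)\in D^{st}_{G'}(G')$; since strong centrality is preserved by $\star$, this restricts to a monoidal functor on the stable subcategories. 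For (2), I would write out the composite of $\on{T}_{\hat\rho}$ and $\on{T}_{\hat\rho'}$ as a functor $D^\circ_G(G)\to D^\circ_{G''}(G'')$, cancel the inner occurrence of $\on{Res}_{\rW'}\circ\on{Ind}_{\rW'}$ using~\eqref{res-ind} for $G'$, and then invoke the functoriality $\Phi_{\hat\rho'}\circ\Phi_{\hat\rho}\is\Phi_{\hat\rho\circ\hat\rho'}$ of Theorem~\ref{transfer:central complexes} to identify the result with $\on{Ind}_{\rW''}\circ\Phi_{\hat\rho\circ\hat\rho'}\circ\on{Res}_\rW=\on{T}_{\hat\rho\circ\hat\rho'}$; all isomorphisms involved are monoidal, and the required pairwise coherence is inherited from that of the $\Phi$'s.

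For (3), the crucial input --- and, I expect, the only genuinely non-formal step --- is the compatibility of Whittaker averaging with restriction to the torus: under the Bezrukavnikov--Deshpande equivalence $\xi_G\colon\on{Wh}_G\is D^\circ_\rW(T)$ of~\cite[Theorem~1.4]{BT} one should have a canonical isomorphism $\xi_G\circ\on{Av}_\psi\is\on{Res}_\rW$ of functors $D^\circ_G(G)\to D^\circ_\rW(T)$, and likewise for $G'$, with both sides normalized by the same sign-character convention (cf.\ Remark~\ref{sign} and the footnote to Definition~\ref{def of central}). I would extract this from the construction of $\xi_G$ in~\cite{BT}, where $\xi_G$ is essentially built from $\on{Av}_\psi$ and Harish--Chandra restriction, noting also that it is independent of the choice of non-degenerate $\mL_\psi$, as any two such are conjugate under the adjoint torus. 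Granting this, \eqref{res-ind} for $G'$ gives $\on{Av}_{\psi'}\circ\on{Ind}_{\rW'}\is\xi_{G'}^{-1}\circ\on{Res}_{\rW'}\circ\on{Ind}_{\rW'}\is\xi_{G'}^{-1}$, and hence
\[\on{Av}_{\psi'}\circ\on{T}_{\hat\rho}=\on{Av}_{\psi'}\circ\on{Ind}_{\rW'}\circ\Phi_{\hat\rho}\circ\on{Res}_\rW\;\is\;\xi_{G'}^{-1}\circ\Phi_{\hat\rho}\circ\xi_G\circ\on{Av}_\psi=\on{Wh}_{\hat\rho}\circ\on{Av}_\psi,\]
which is exactly the commuting square asserted in (3). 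The main obstacle will thus be isolating and proving the identity $\xi_G\circ\on{Av}_\psi\is\on{Res}_\rW$ from the internals of~\cite{BT}; once it is in hand, the rest --- including the bookkeeping of the various sign-character twists so that the square closes on the nose rather than up to a twist --- is routine.
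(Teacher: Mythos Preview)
Your proposal is correct and matches the paper's approach: parts (1) and (2) are reduced to Theorem~\ref{transfer:central complexes} and the identity~\eqref{res-ind}, and part (3) hinges on the compatibility $\xi_G\circ\on{Av}_\psi\is\on{Res}_\rW$, which the paper cites as \cite[Proposition~5.2]{BT}. The only difference is that the paper treats the right-hand square $\on{Av}_{\psi'}\circ\on{Ind}_{\rW'}\is\xi_{G'}^{-1}$ as a separate input \cite[Corollary~5.4~(iii)]{BT}, whereas you correctly observe that it already follows from the left-hand square applied to $G'$ together with~\eqref{res-ind}.
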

\begin{proof}
Part (1) and (2) follow from
Theorem \ref{ind_N res _N} and 
Theorem \ref{transfer:central complexes}.
For part (3), we need to check the following daigram is commutative
\[\xymatrix{D_G^\circ(G)\ar[r]^{\on{Res}_\rW}\ar[d]^{\on{Av}_\psi}&D_\rW^\circ(T)\ar[r]^{\on{T}_{\hat\rho}}\ar[d]^{\on{id}}&D_{\rW'}^\circ(T')\ar[r]^{\on{Ind}_\rW}\ar[d]^{\on{id}}&D^\circ_{G'}(G')\ar[d]^{\on{Av}_{\psi'}}\\
\on{Wh}_G\ar[r]^{\xi_G}&D_\rW^\circ(T)\ar[r]^{\on{T}_{\hat\rho}}&D_{\rW'}^\circ(T')\ar[r]^{\xi_{G'}^{-1}}&\on{Wh}_{G'}}.\]
\cite[Propoisition 5.2]{BT} implies the 
 commutativity of 
the left square diagram 
and \cite[Corollary 5.4 (iii)]{BT} implies the commutativity of the right square diagram.
The desired claim follows.
\end{proof}

\begin{example}\label{gamma sheaves on G}
Let $\hat\rho:\hat G'\to \hat G=GL_n$ be a representation.
Let $\on{tr}:GL_n\to\mathbb A^1$ be the trace map.
In \cite{C1,C2}, we shown that $\on{tr}^*\mL_\psi\in D_G^\circ(G)$
is a vanishing complex and its transfer 
$\Phi_{G',\hat\rho}=\on{T}_{\hat\rho}(\on{tr}^*\mL_\psi)\in D_{G'}^\circ(G')$
is the $\gamma$-sheaf on $G'$ in \cite{BK}.
The fact that $\Phi_{G',\hat\rho}$ is a vanishing complex is the content of 
\cite[Conjecture 6.5]{BK}.

\end{example}

\section{Transfer for stable functions}
We shall show that
the transfer functor between stable complexes
on reductive group over finite fields 
provides a geometrization of the transfer map between stable functions on finite reductive groups.
As an application, we 
give a new geometric proof of Laumon-Letellier's formula for 
  the transfer map in \cite{LL1}.
\subsection{Stable functions}
We assume $k=\overline{\mathbb F}_q$  
and $G$ and $T$ are defined over $\mathbb F_q$. Let $F:G\to G$ be the (geometric) Frobenius endomorphism.
The Frobenius endomorphism induces 
an endomorphism $\hat F:\hat G\to \hat G$ of the dual 
group and the dual torus $\hat T$ is stable under $\hat F$. 
Let $G^F$ be the corresponding finite reductive group. 
Let $\on{Irr}(G^F)$ be the set of irreducible $\overline{\mathbb Q}_\ell$-characters of $G^F$
and let $(C(G^F),\star)$ be the ring of $\overline{\mathbb Q}_\ell$-class functions on $G^F$ with multiplication given by convolution of functions
$f\star f'(g)=\sum_{h\in G^F} f(gh^{-1})f'(h)$.

According to \cite[Section 5.6]{DL}, the set of $\hat F$-stable semisimple conjugacy classes of $\hat G$ are in bijection with the set 
$(\hat T//\rW)^{\hat F}$ of $\hat F$-fixed points of the GIT quotient $\hat T//\rW$. The main results in \cite{DL} imply that there is a surjective map
\[\mathcal L:\mathrm{Irr}(G^F)\to(\hat T//\rW)^{\hat F}\]
For each class function $f\in C(G^F)$ there exists 
a $\gamma$-function
$\gamma_f:\on{Irr}(G^F)\to\overline{\mathbb Q}_\ell$
characterized by the equation
\[
f\star\chi=\gamma_f(\chi)\chi\]
for all $\chi\in\on{Irr}(G^F)$.

\begin{definition}
A class function $f\in C(G^F)$ is called \emph{stable} if 
the corrsponding $\gamma$-function 
factors through the Deligne-Lusztig map
\[\gamma_f:\on{Irr}(G^F)\stackrel{\mathcal L}\to(\hat T//\rW)^{\hat F}\to\overline{\mathbb Q}_\ell.\]
That is, for any $\chi,\chi'\in\mathrm{Irr}(G^F)$ 
we have 
$\gamma_f(\chi)=\gamma_f(\chi')\ \ \text{if}\ \ \mathcal{L}(\chi)=\mathcal{L}(\chi')$.
We denote by $C^{st}(G^F)$ the space of stable fucntions on $G^F$
\end{definition}

Note that for any stable function $f\in C^{st}(G^F)$
one can view the corresponding $\gamma$-function as 
a fucntion 
\[\gamma_f:(\hat T//\rW)^{\hat F}\to\overline{\mathbb Q}_\ell\]
and the assignment $f\to \gamma_f$
defines a ring isomorphism
\begin{equation}\label{description}
    C^{st}(G^F)\is\overline{\mathbb Q}_\ell[(\hat T//\rW)^{\hat F}]
\end{equation}
where the right hand side is the ring of $\overline{\mathbb Q}_\ell$-functions on 
$(\hat T//\rW)^{\hat F}$
and the ring structures on $C^{st}(G^F)$ is given by 
convolution.
For any $w\in\rW$, let $F_w=w\circ F:T\to T$ be the $w$-twisted Frobenius endomorphsm of $T$.
For a
character $\theta$ of $T^{F_w}$, let
$DL_\theta$ be the Deligne-Lusztig virtual character attached to $\theta$, and let 
$[\theta]\in (\hat T//\rW)^{\hat F}$ be the 
semi-simple $\hat F$-stable conjugacy classes in $\hat G$ associated to
$\theta$ (\cite[Section 5.6]{DL})

\begin{lemma}
The isomorphism~\eqref{description} is characterized by the property that for any 
$w\in\rW$ and any
character 
$\theta$ of $T^{F_w}$, we have 
\[f\star DL_\theta=\gamma_f([\theta])DL_\theta.\]
\end{lemma}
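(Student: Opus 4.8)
The plan is to verify that the $\gamma$-function $\gamma_f$ of a stable $f\in C^{st}(G^F)$, when regarded as a function on $(\hat T/\!/\rW)^{\hat F}$ via~\eqref{description}, takes the value $\gamma_f([\theta])$ on the eigen-equation $f\star DL_\theta = \gamma_f([\theta])\,DL_\theta$, and that this collection of equations (over all $w\in\rW$ and all $\theta\in\widehat{T^{F_w}}$) uniquely pins down the isomorphism~\eqref{description}. First I would recall the Deligne--Lusztig decomposition of $C(G^F)$: the virtual characters $DL_\theta$ for $(w,\theta)$ ranging over $\rW$-conjugacy classes of pairs span $C(G^F)$ over $\overline{\mathbb Q}_\ell$, and each $DL_\theta$ is, up to sign and a unipotent-support correction, a sum of irreducible characters $\chi$ all of which lie in the same rational Lusztig series, i.e. all satisfy $\mathcal L(\chi)=[\theta]$ (this is exactly the content of \cite[Section 5.6]{DL}). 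Hence for a stable $f$, the defining relation $f\star\chi=\gamma_f(\chi)\chi$ has $\gamma_f(\chi)$ constant on each such series, equal to $\gamma_f([\theta])$; since $DL_\theta$ is a $\overline{\mathbb Q}_\ell$-linear combination of these $\chi$'s, convolution by $f$ acts on $DL_\theta$ by the scalar $\gamma_f([\theta])$, giving the displayed identity.

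Next I would argue the characterization (uniqueness) statement. Any $\overline{\mathbb Q}_\ell$-linear functional on $C^{st}(G^F)$ — in particular the would-be value ``evaluation of $\gamma_f$ at a given point of $(\hat T/\!/\rW)^{\hat F}$'' — is determined by its values against a spanning set. The virtual characters $\{DL_\theta\}$ span $C(G^F)$, and projecting to the stable part (or rather: the eigenvalue of $f\star(-)$ on $DL_\theta$) only sees the stable datum $[\theta]$; moreover the map $(w,\theta)\mapsto[\theta]$ surjects onto $(\hat T/\!/\rW)^{\hat F}$ by \cite[Section 5.6]{DL}. So prescribing $\gamma_f$ on every $[\theta]$ prescribes it on all of $(\hat T/\!/\rW)^{\hat F}$, and conversely any function on $(\hat T/\!/\rW)^{\hat F}$ arises from a unique stable $f$ whose $\gamma$-values are read off this way; this is precisely the assertion that~\eqref{description} is characterized by the displayed formula. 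I would also note compatibility with the ring structures: the convolution product on $C^{st}(G^F)$ matches pointwise multiplication on $\overline{\mathbb Q}_\ell[(\hat T/\!/\rW)^{\hat F}]$ because $(f\star f')\star DL_\theta = f\star(\gamma_{f'}([\theta])DL_\theta)=\gamma_f([\theta])\gamma_{f'}([\theta])DL_\theta$, so the eigenvalue is multiplicative — consistent with the already-asserted ring isomorphism and showing the new characterization is ring-theoretic as well.

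The main obstacle is the bookkeeping around $DL_\theta$ not being a single irreducible character but a virtual one: one must be careful that the ``$\gamma_f(\chi)$ is constant on the series'' step genuinely uses stability of $f$ (otherwise the scalar would vary with $\chi$ and no common eigenvalue would exist), and that the unipotent-support/Jordan-decomposition subtleties in the Deligne--Lusztig theory do not break the rationality of the series labelling $\mathcal L$. Concretely, the point to get right is that two pairs $(w,\theta)$ and $(w',\theta')$ give the same $[\theta]\in(\hat T/\!/\rW)^{\hat F}$ exactly when the corresponding Lusztig series coincide, so that the scalar $\gamma_f([\theta])$ is well-defined independently of the chosen representative $(w,\theta)$ — this is where I would cite \cite[Section 5.6]{DL} and, if needed, the standard fact that $DL_\theta$ and $DL_{\theta'}$ have disjoint constituents when $[\theta]\ne[\theta']$. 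Everything else is formal linear algebra on the finite-dimensional space $C(G^F)$ together with the surjectivity of $\mathcal L$ already recorded in the excerpt.
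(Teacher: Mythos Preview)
The paper states this lemma without proof, so there is no argument to compare against. Your proposal is correct and is the standard route: each $DL_\theta$ is a $\overline{\mathbb Q}_\ell$-linear combination of irreducible characters lying in the Lusztig series indexed by $[\theta]$, so stability of $f$ forces $f\star DL_\theta=\gamma_f([\theta])DL_\theta$; conversely, surjectivity of $(w,\theta)\mapsto[\theta]$ onto $(\hat T/\!/\rW)^{\hat F}$ (\cite[Section 5.6]{DL}) shows these equations determine $\gamma_f$ and hence the isomorphism~\eqref{description}.
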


We shall show that the category stable complexes 
on $G$ provide a geometrization of stable functions on $G^F$.
Note that 
all the geometric objects introduced before 
are defined over $\mathbb F_q$ and we denote by 
$D_G(G)^F$, $D^\circ_G(G)^F$, etc,  the corresponding categories of $F$-equivariant objects.
For any $w\in\rW$ we let
$D(T)^{F_w}$ be the category of $F_w$-equivariant object in $D(T)$.
For any complex $\mF$ and an integer $n$, we write $F\langle n\rangle=\mF[n](n/2)$.

Let $\mF\in D_G(G)^F$ and denote by 
$\mF_T=\on{Res}_\rW(\mF)\in D_\rW(T)^F$.
Note that 
for any $w\in\rW$,
we have an isomorphism $F_{w}^*\mF_{T} \is F^*(w)^*\mF_{T}\is F^*\mF_{T}\is \mF_{T}$ 
and we denote by $\mF_{T,w}\in D(T)^{F_{w}}$ the corresponding object.
We denote by 
\[\chi_\mF=\on{Tr}(F,\pi_G^*(\mF))\in C(G^F)\]
the  class function assocated to $\mF$
under the Sheaf-Function correspondence. Here $\pi_G:G\to\frac{G}{G}$
is the natural quotient map.

\begin{proposition}\label{characterization of stable functions}
(1) For any $F$-equivaraint stable complex $\mF\in D^{st}_G(G)^F$ the correspoinding class function 
$\chi_\mF\in C^{st}(G^F)$ is stable and the corresponding $\gamma$-function is given by 
\[\gamma_{\chi_\mF}([\theta])=\on{Tr}(F_w, H^*_c(T,\mF_{T,w}\langle-\dim G/B\rangle\otimes\mL_{\theta}^{-1}))=
q^{\dim G/B}\on{Tr}(F_w, H^*_c(T,\mF_{T,w}\otimes\mL_{\theta}^{-1}))\]
here $\mL_\theta\in D(T)^{F_w}$
is the Kummer local system assoicated to $\theta$.

(2)The
assignment $\mF\to \chi_\mF$ defines a 
    surjective map 
    \[\chi:D^{st}_G(G)^F\to C^{st}(G^F)\]
  
\end{proposition}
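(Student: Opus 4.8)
The plan is to establish part (1) first and then deduce surjectivity in part (2) from the Grothendieck--Lefschetz trace formalism together with the surjectivity results for the geometric transfer functors.

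\textbf{Proof of part (1).} Fix an $F$-equivariant stable complex $\mF\in D^{st}_G(G)^F$. By definition $\mF_T=\on{Res}_\rW(\mF)\in D^{st}_\rW(T)$ is strongly central, so for every tame $\chi\in\calC(T)(\barQ)$ the \emph{entire} stabilizer $\rW_\chi$ acts trivially on $H_c^*(T,\mF_T\otimes\mL_\chi)$. I would first recall from \cite{DL} that, via the Deligne--Lusztig construction, the characters of $G^F$ whose semisimple parameter is $[\theta]$ are exactly the constituents of the various $DL_\theta$ for $w$-twisted tori, and that two Deligne--Lusztig characters $DL_\theta$, $DL_{\theta'}$ (for possibly different $w,w'$) have $[\theta]=[\theta']$ precisely when $\theta'$ is $\rW$-conjugate to $\theta$ in the appropriate sense. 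Then I would compute $\gamma_{\chi_\mF}(\chi)$ for $\chi$ a constituent of $DL_\theta$ using the preceding Lemma: $\chi_\mF\star DL_\theta=\gamma_{\chi_\mF}([\theta])\,DL_\theta$. The key input is the compatibility of $\on{Res}_\rW$ with the averaging/Whittaker functor (\cite[Propositions~5.2, 5.4]{BT}, as used in the proof of Theorem~\ref{Trans for vansihing}) and the sheaf-function dictionary: the trace of $F$ on the convolution $\chi_\mF\star DL_\theta$ is, up to the normalization $q^{\dim G/B}$ (coming from the shift $\langle -\dim G/B\rangle$ and the fact that $DL_\theta$ arises from $q_!p^*$ of a Kummer sheaf on $T$), the trace of $F_w$ on $H_c^*(T,\mF_{T,w}\otimes\mL_\theta^{-1})$. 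This gives the displayed formula for $\gamma_{\chi_\mF}([\theta])$. Finally, to see that $\chi_\mF$ is genuinely \emph{stable}, i.e. that $\gamma_{\chi_\mF}([\theta])$ depends only on the class $[\theta]\in(\hat T//\rW)^{\hat F}$ and not on the pair $(w,\theta)$: if $[\theta]=[\theta']$ then there is $v\in\rW$ conjugating the $F_w$-datum to the $F_{w'}$-datum, hence $\mL_{\theta'}\is v^*\mL_\theta$ up to Frobenius structure, and the $\rW$-equivariant structure on $\mF_T$ supplies an isomorphism $H_c^*(T,\mF_{T,w}\otimes\mL_\theta^{-1})\is H_c^*(T,\mF_{T,w'}\otimes\mL_{\theta'}^{-1})$ commuting with the respective twisted Frobenii; because $\mF_T$ is \emph{strongly} central this isomorphism is canonical (the element of $\rW_\chi$ relating the two choices of $v$ acts trivially), so the traces agree. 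This is exactly where strong centrality, as opposed to mere centrality, is essential, and I expect this step to be the main obstacle: one must check that the comparison isomorphism is independent of all auxiliary choices and intertwines the Frobenius actions, which requires unwinding the $\rW$-equivariance conventions and the sign twist of Remark~\ref{sign}.

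\textbf{Proof of part (2).} Surjectivity of $\chi$ follows by combining part (1) with the isomorphism \eqref{description} $C^{st}(G^F)\is\barQ[(\hat T//\rW)^{\hat F}]$ and a dimension/base-point count. Concretely, I would argue that every $F$-equivariant strongly central complex on $T$ is, up to the equivalence $\on{Res}_\rW$ of Theorem~\ref{ind_N res _N}, realized by a stable complex on $G$; so it suffices to show the induced map $D^{st}_\rW(T)^F\to\barQ[(\hat T//\rW)^{\hat F}]$, sending $\mF_T$ to the function $[\theta]\mapsto q^{\dim G/B}\on{Tr}(F_w,H_c^*(T,\mF_{T,w}\otimes\mL_\theta^{-1}))$, hits everything. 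For this, for each point $c\in(\hat T//\rW)^{\hat F}$, pick a representing tame character $\chi$ of some $T^{F_w}$ and take the (shifted, suitably $\rW$-equivariantly structured) Kummer sheaf $\mL_\chi$ averaged over its $\rW$-orbit: this is a strongly central perverse sheaf on $T$ whose associated function on $(\hat T//\rW)^{\hat F}$ is (a nonzero multiple of) the delta function at $c$, by the orthogonality of Kummer sheaves under $H_c^*(T,-)$ and the fact that $H_c^*(T,\mL_\chi\otimes\mL_{\theta}^{-1})$ is concentrated in top degree exactly when $\chi=\theta$ and vanishes otherwise. Since these delta functions span $\barQ[(\hat T//\rW)^{\hat F}]$ and $\chi$ is $\barQ$-linear, surjectivity follows. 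I would close by noting that the kernel of $\chi$ consists of complexes all of whose $H_c^*(T,\mF_{T,w}\otimes\mL_\theta^{-1})$ are Frobenius-trace-zero, so $\chi$ is not injective, but injectivity is not asserted.
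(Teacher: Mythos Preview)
The paper does not argue this proposition in-text: it simply cites \cite[Proposition~6.7]{C3}. Your outline for part~(1) is along the right lines and matches what is carried out in that reference, though the passage from $\chi_\mF\star DL_\theta$ on $G^F$ to a trace on $H_c^*(T,\mF_{T,w}\otimes\mL_\theta^{-1})$ requires the Deligne--Lusztig character formula together with a careful unwinding of the Frobenius structures that you have only gestured at.

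Your argument for part~(2), however, has a genuine gap. The $\rW$-average $\mF=\bigoplus_{w\in\rW/\rW_\chi}w^*\mL_\chi$ is \emph{not} strongly central in general. Testing at $\mu=\chi^{-1}$, only the summand $\mL_\chi$ contributes and one finds $H_c^*(T,\mF\otimes\mL_\mu)\cong H_c^*(T,\barQ)$, on which $\rW_\mu=\rW_\chi$ acts through its natural action on the torus. Since $H_c^*(T,\barQ)$ is, up to shift and Tate twist, the exterior algebra on $X_*(T)\otimes\barQ$, any nontrivial reflection in $\rW_\chi$ acts by $-1$ on some nonzero graded piece; hence $\mF$ fails to be strongly central (indeed fails to be central) whenever $\rW_\chi^\circ\neq 1$. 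Relatedly, your claim that $H_c^*(T,\mL_\chi\otimes\mL_\theta^{-1})$ is concentrated in top degree when $\chi=\theta$ is false: for $\chi=\theta$ this is $H_c^*(T,\barQ)$, which is spread over degrees $\dim T$ through $2\dim T$. Your construction therefore works only at the regular points of $(\hat T//\rW)^{\hat F}$ and does not produce delta functions at the singular ones. A correct surjectivity argument, as in \cite{C3}, requires a different source of strongly central complexes, for instance via the Mellin transform description of $D^{st}_\rW(T)$ from \cite{GL}, or by transporting the Lusztig-series central idempotents through the equivalence $\on{Ind}_\rW$.
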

    \begin{proof}
    This is \cite[Proposition 6.7]{C3}
    \end{proof}

\subsection{Formula for transfer for stable functions}

Let $\hat\rho:\hat G'\to \hat G$ be a morphism as in 
Section \ref{construction}.
Assume $G',T'$ are also defined over $\mathbb F_q$
with geometric Frobenious $F:G'\to G'$
and $\hat\rho$ is  
compatible with the endomorphism $\hat F$ of the dual groups $\hat G, \hat G'$.
Then $\hat\rho$ induces a map between the $\hat F$-stable 
semisimple conjugacy classes 
\[\hat\rho_{ss}: (\hat T'//\rW')^{\hat F}\to (\hat T//\rW)^{\hat F}\]
and we define the transfer map b
\begin{equation}
\on{t}_{\hat\rho}: C^{st}(G^F)\longrightarrow
C^{st}((G')^{F})
\end{equation}
to be the unique ring homomorohism fitting into the following commutative diagram
\[\xymatrix{C^{st}(G^F)\ar[r]^{\on{t}_{\hat\rho}}\ar[d]_{\simeq}^{\eqref{description}}&C^{st}((G')^F)\ar[d]_{\simeq}^{\eqref{description}}\\
\overline{\mathbb Q}_\ell[(\hat T//\rW)^{\hat F}]\ar[r]^{\hat\rho_{ss}^*}&\overline{\mathbb Q}_\ell[(\hat T//\rW)^{\hat F}]}\]

We have the following characterization of  
transfer maps:
\begin{lemma}\label{char of transfer}
    For any $f\in C^{st}(G^F)$ the transfer $f':=\on{t}_{\hat\rho}(f)\in C^{st}((G')^F)$ is the unique stable function characterized by the following property: For any 
$w'\in\rW'$ and any character 
    $\theta':(T')^{F_{w'}}\to\overline{\mathbb Q}_\ell$, let $w\in\rW$ be a lift in~\eqref{lifting} and let
    $\theta=\rho_{T,T'}^*\theta':T^{F_w}\to (T')^{F_{w'}}\to \overline{\mathbb Q}_\ell$ be the pull-back of 
    $\theta'$ along the map 
    $\rho_{T,T'}:T^{F_w}\to (T')^{F_{w'}}$. 
    We have 
    \[\gamma_{f'}([\theta'])=\gamma_f([\theta]).\]
    
\end{lemma}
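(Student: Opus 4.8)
\textbf{Proof proposal for Lemma \ref{char of transfer}.}

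The plan is to show that the function $f' = \on{t}_{\hat\rho}(f)$ is characterized by the stated $\gamma$-value identity, by unwinding both sides through the isomorphism~\eqref{description} and the compatibility of the Deligne-Lusztig construction with the dual map. First I would recall that, by the definition of $\on{t}_{\hat\rho}$ via the commutative square, we have $\gamma_{f'} = \gamma_f \circ \hat\rho_{ss}$ as functions on $(\hat T'//\rW')^{\hat F}$; that is, for any $\hat F$-stable semisimple class $[s'] \in (\hat T'//\rW')^{\hat F}$ we have $\gamma_{f'}([s']) = \gamma_f(\hat\rho_{ss}([s']))$. So the entire content of the lemma reduces to the claim that, under the parametrization of $\hat F$-stable semisimple classes by characters of twisted tori (via \cite[Section 5.6]{DL}), the map $\hat\rho_{ss}$ sends the class $[\theta']$ attached to a character $\theta'$ of $(T')^{F_{w'}}$ to the class $[\theta]$ attached to $\theta = \rho_{T,T'}^*\theta'$ on $T^{F_w}$, where $w$ is a lift of $\rho_{\rW',\rW}(w')$ as in~\eqref{lifting}.

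The key step is therefore to verify $\hat\rho_{ss}([\theta']) = [\rho_{T,T'}^*\theta']$. For this I would trace through the definition of the Deligne-Lusztig parametrization on the dual side: a character $\theta'$ of $(T')^{F_{w'}}$ corresponds, via $X^*(T') \cong X_*(\hat T')$ and Lang's theorem, to a semisimple element $s' \in \hat T'$ well-defined up to $\hat F'$-twisted $\rW'$-conjugacy, where the twist is by $w'$; concretely $s'$ is obtained from $\theta'$ by viewing $\theta'$ as an element of $\hat T'^{F'_{w'}} \cong (X_*(\hat T') \otimes \barQ_\ell^\times)^{w' F'}$ after identifying $(T')^{F_{w'}}$-characters with this group. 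Applying $\hat\rho: \hat T' \to \hat T$ to $s'$ and using that $\hat\rho_{T,T'}$ intertwines $w' F'$ with $w F$ — which is precisely the dual of diagram~\eqref{lifting}, i.e. the commutative square $\hat\rho_{T,T'} \circ w' = w \circ \hat\rho_{T,T'}$ together with Frobenius-equivariance — one gets that the image class $\hat\rho_{ss}([s'])$ is represented by $\hat\rho_{T,T'}(s') \in \hat T$. On the character side, the dual of $\hat\rho_{T,T'}: \hat T' \twoheadrightarrow \hat Z \hookrightarrow \hat T$ is exactly $\rho_{T,T'}: T \twoheadrightarrow Z \hookrightarrow T'$ from~\eqref{dual map}, and pulling back characters along $\rho_{T,T'}$ is the operation dual to pushing forward cocharacters along $\hat\rho_{T,T'}$. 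Hence the class of $\hat\rho_{T,T'}(s')$ in $(\hat T//\rW)^{\hat F}$ is exactly $[\rho_{T,T'}^*\theta'] = [\theta]$, as desired. Uniqueness of $f'$ is then automatic: the stable functions on $(G')^F$ are identified with $\barQ_\ell[(\hat T'//\rW')^{\hat F}]$ by~\eqref{description}, and by the main result of \cite{DL} every $\hat F$-stable semisimple class of $\hat G'$ is of the form $[\theta']$ for some $w'$ and some $\theta'$ of $(T')^{F_{w'}}$, so the collection of values $\{\gamma_{f'}([\theta'])\}$ determines $f'$ completely.

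I expect the main obstacle to be bookkeeping rather than anything deep: one must be careful about which Frobenius twist ($F$ versus $F_w$, and on which side of the duality) is acting, and about the fact that the lift $w$ of $\rho_{\rW',\rW}(w')$ to $N_\rW(\rW_L)$ is only well-defined modulo $\rW_L$. For the latter point I would invoke that the $\hat F$-stable class $[\theta]$ in $(\hat T//\rW)^{\hat F}$ depends only on the $\rW$-conjugacy class of the pair $(w, \theta)$, so the ambiguity in $w$ — which changes $(w,\theta)$ by $\rW_L$-conjugation and correspondingly alters $\theta$ only by the $\rW_L$-action, under which $\rho_{T,T'}^*\theta'$ is invariant because $\rho_{T,T'}$ factors through $T \to Z$ and $\rW_L$ acts trivially on $Z$ (cf.~\eqref{char of Z}) — does not affect the resulting class. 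Once this compatibility is in place the lemma follows immediately from the definition of $\on{t}_{\hat\rho}$.
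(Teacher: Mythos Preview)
Your proposal is correct and is essentially a careful unpacking of what the paper records only as ``follows from the construction'': you reduce to $\gamma_{f'}=\gamma_f\circ\hat\rho_{ss}$ and then verify $\hat\rho_{ss}([\theta'])=[\rho_{T,T'}^*\theta']$ via the Deligne--Lusztig parametrization, which is exactly the content implicit in the definition of $\on{t}_{\hat\rho}$. Your extra care with the ambiguity of the lift $w\in N_\rW(\rW_L)$ (using that $\rho_{T,T'}$ factors through $Z$, cf.~\eqref{char of Z}) is a detail the paper leaves to the reader.
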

\begin{proof}
    It follows from the constuction.
\end{proof}
The following theorem shows that the transfer morphisms between stable complexes can be viewed as a geometrization of the transfer maps between stable fucntions:
Consider the functor
\[\on{T}_\rho':=\on{T}_{\hat\rho}\langle\dim G'/B'-\dim G/B\rangle:D^{st}_G(G)\to D^{st}_{G'}(G')\]

\begin{thm}\label{geometrization}
We have the following commuative diagram
\[\xymatrix{D^{st}_G(G)^F\ar[r]^{\on{T}'_\rho}\ar[d]^\chi&D^{st}_{G'}(G')^F\ar[d]^\chi\\
C^{st}(G^F)\ar[r]^{\on{t}_{\hat\rho}}&C^{st}((G')^F)}\]
    
\end{thm}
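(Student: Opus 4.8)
The strategy is to reduce the claimed commutativity to the already-established combinatorial characterizations of both the geometric transfer functor (Lemma~\ref{char of transfer} and Theorem~\ref{Trans for vansihing}) and the function-theoretic transfer map. Since $\on{t}_{\hat\rho}$ is a ring homomorphism and $C^{st}(G^F)$ is generated, via the isomorphism~\eqref{description}, by functions whose $\gamma$-values are supported at single points $[\theta]\in(\hat T//\rW)^{\hat F}$, it suffices to check the identity $\chi(\on{T}'_\rho(\mF)) = \on{t}_{\hat\rho}(\chi_\mF)$ after pairing both sides against all Deligne--Lusztig virtual characters $DL_{\theta'}$ for $w'\in\rW'$ and $\theta'$ a character of $(T')^{F_{w'}}$; equivalently, by Lemma~\ref{char of transfer}, it suffices to show
\[\gamma_{\chi(\on{T}'_\rho(\mF))}([\theta']) = \gamma_{\chi_\mF}([\theta])\]
where $w\in\rW$ is a lift of $\rho_{\rW',\rW}(w')$ as in~\eqref{lifting} and $\theta = \rho_{T,T'}^*\theta'$ is the pullback along $\rho_{T,T'}:T^{F_w}\to (T')^{F_{w'}}$.

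\textbf{Key steps.} First I would invoke Proposition~\ref{characterization of stable functions}(1) to express the left-hand side: for the stable complex $\on{T}'_\rho(\mF)\in D^{st}_{G'}(G')^F$, its restriction $\on{Res}_{\rW'}(\on{T}'_\rho(\mF))$ is, by the definition of $\on{T}_{\hat\rho}$ and the identity~\eqref{res-ind}, precisely $\Phi_{\hat\rho}(\on{Res}_\rW(\mF))$ up to the twist $\langle\dim G'/B'-\dim G/B\rangle$; that is, $(\on{T}'_\rho(\mF))_{T'} \is (\rho_{T,T'})_!(\mF_T)$ in $D^\circ_{\rW'}(T')$, twisted appropriately. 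Then Proposition~\ref{characterization of stable functions}(1) gives
\[\gamma_{\chi(\on{T}'_\rho(\mF))}([\theta']) = q^{\dim G'/B'}\on{Tr}\!\bigl(F_{w'},\, H^*_c(T',\, (\on{T}'_\rho(\mF))_{T',w'}\otimes\mL_{\theta'}^{-1})\bigr),\]
and after unwinding the twist this becomes $q^{\dim G/B}\on{Tr}(F_{w'}, H^*_c(T', (\rho_{T,T'})_!(\mF_{T,w})\otimes\mL_{\theta'}^{-1}))$. Second, I would apply the projection formula~\eqref{projection} — which in the equivariant/Frobenius setting identifies $H^*_c(T',(\rho_{T,T'})_!(\mF_{T,w})\otimes\mL_{\theta'}^{-1})$ with $H^*_c(T,\mF_{T,w}\otimes\rho_{T,T'}^*\mL_{\theta'}^{-1}) = H^*_c(T,\mF_{T,w}\otimes\mL_\theta^{-1})$ — compatibly with the twisted Frobenius $F_w$ on the source and $F_{w'}$ on the target, using the commutative square~\eqref{lifting} relating the $w$- and $w'$-twisted Frobenii. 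This compatibility is exactly what makes $\rho_{T,T'}:T^{F_w}\to (T')^{F_{w'}}$ a well-defined map of finite groups and makes the trace of $F_{w'}$ on the target cohomology equal the trace of $F_w$ on the source cohomology. Matching with Proposition~\ref{characterization of stable functions}(1) applied to $\mF$ itself then yields $q^{\dim G/B}\on{Tr}(F_w, H^*_c(T,\mF_{T,w}\otimes\mL_\theta^{-1})) = \gamma_{\chi_\mF}([\theta])$, completing the identity.

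\textbf{Main obstacle.} The principal subtlety lies in the bookkeeping of the equivariant structures and twists through the projection formula: one must verify that the $\rW'$-equivariant structure on $(\rho_{T,T'})_!(\mF_T)$ produced in Lemma~\ref{W-equ} — which is defined so as to be independent of the lift $w$ — induces, upon passing to $F_{w'}$-equivariant objects, exactly the object $\mF_{T,w}$ transported along $\rho_{T,T'}$, and that the cohomological shift/Tate twist $\langle\dim G'/B'-\dim G/B\rangle$ in the definition of $\on{T}'_\rho$ is precisely what is needed to reconcile the two appearances of $q^{\dim G/B}$ versus $q^{\dim G'/B'}$ in the two applications of Proposition~\ref{characterization of stable functions}(1). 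A secondary point requiring care is the disconnected-center case: there the identification $[\theta]$ vs.\ $[\theta']$ of semisimple classes under $\hat\rho_{ss}$ must be checked to be compatible with the pullback $\theta=\rho_{T,T'}^*\theta'$, which is where the $z$-extension argument of Lemma~\ref{z-extension} (and the Kottwitz reference therein) enters; but since $\on{t}_{\hat\rho}$ is \emph{defined} via $\hat\rho_{ss}^*$, this compatibility is essentially built into Lemma~\ref{char of transfer}, so it reduces to confirming that the Deligne--Lusztig parametrization $\theta\mapsto[\theta]$ intertwines $\rho_{T,T'}^*$ with $\hat\rho_{ss}$, a statement available from \cite{DL} and the discussion in Section~\ref{construction}.
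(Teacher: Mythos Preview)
Your proposal is correct and follows essentially the same route as the paper: reduce via Lemma~\ref{char of transfer} to checking $\gamma_{f'}([\theta'])=\gamma_f([\theta])$, compute $\on{Res}_{\rW'}(\on{T}'_\rho(\mF))\is(\rho_{T,T'})_!\on{Res}_\rW(\mF)\langle\dim G'/B'-\dim G/B\rangle$ using~\eqref{res-ind}, apply the projection formula~\eqref{projection} compatibly with the twisted Frobenii $F_w$, $F_{w'}$ via~\eqref{lifting}, and invoke Proposition~\ref{characterization of stable functions}(1) on both sides. The shift $\langle\dim G'/B'-\dim G/B\rangle$ indeed cancels exactly as you anticipate, and your ``secondary point'' about the disconnected-center case is not needed here since Lemma~\ref{char of transfer} already packages the compatibility $\theta\mapsto[\theta]$ with $\hat\rho_{ss}$.
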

\begin{proof}
Let  $\mF\in D_G^{st}(G)$ be a stable complex. Denote by $\mF'=\on{T}'_{\hat\rho}(\mF)$, $f=\chi_\mF$,
and $f'=\chi_{\mF'}$.
Let $w,w',\theta,\theta'$ be as in Lemma \ref{char of transfer}. 
We have
\[\on{Res}_{\rW'}(\mF')\is
\on{Res}_{\rW'}\circ\on{Ind}_{\rW'}\circ(\rho_{T,T'})_!\on{Res}_\rW(\mF)\langle\dim G'/B'-\dim G/B\rangle\stackrel{~\eqref{res-ind}}\is\]\[\is(\rho_{T,T'})_!\on{Res}_\rW(\mF)\langle\dim G'/B'-\dim G/B\rangle\]
and the projection formula implies an isomorphism
\[H_c^*(T',\on{Res}_{\rW'}(\mF')\langle-\dim G'/B'\rangle\otimes\mL_{\theta'}^{-1})\is H_c^*(T',(\rho_{T,T'})_!\on{Res}_\rW(\mF)\langle-\dim G/B\rangle\otimes\mL_{\theta'}^{-1})\is\]
\[\is H_c^*(T,\on{Res}_\rW(\mF)\langle-\dim G/B\rangle\otimes\mL_{\theta}^{-1}) \]
intertwines the endomorphisms $F_{w'}$ and $F_w$. 
Applying Proposition \ref{characterization of stable functions}, we obtain
\[\gamma_{f'}([\theta'])=q^{\dim G'/B'}\on{Tr}(F_{w'},H_c^*(T',\on{Res}_{\rW'}(\mF')\otimes\mL_{\theta'}^{-1}))=q^{\dim G/B}\on{Tr}(F_{w},H_c^*(T,\on{Res}_{\rW}(\mF)\otimes\mL_{\theta}^{-1}))=\]
\[=\gamma_{f}([\theta])\]
and the theorem follows from the characterization of transfer maps in Lemma \ref{char of transfer}.
\end{proof}

As an application, we obtain the following formula 
of transfer maps in terms of Deligne-Lusztig inductions.
Let $f\in C^{st}(G^F)$ be a stable function.
For any $w\in\rW$ let 
$f_w\in C(T^{F_w})$ be the unique function 
such that for any character $\theta$ on $T^{F_w}$ we have
\begin{equation}\label{f_w}
    f_w\star\theta=\gamma_f([\theta])\theta
\end{equation}
For any $w'\in\rW'$, there exists an embedding 
$\iota_{w'}:T'\to G'$ intertwines the $w'$-twisted Frobenius $F_{w'}$ on $T'$ with the Frobenius $F$ on $G'$.
Denote by $T_{w'}'=\iota_{w'}(T')\subset G' $ which is a $F$-stable maximal torus of $G'
$. We have an isomorphism 
$\iota_{w'}:(T')^{F_{w'}}\is (T'_{w'})^F$.
Recall 
 the Deligne-Lusztig induction map (see \cite[Section 3.3]{LL1})
\[\on{R}_{T'_{w'}}^{G'}:C((T')^{F_{w'}})\is C((T_{w'}')^F)\to C((G')^F).\]
For any $w\in\rW'$ we pick a lift $w\in\rW$
as in diagram~\eqref{lifting} and denote by 
\[f_{w'}=(\rho_{T,T'})_!f_w\in C((T')^{F_{w'}})\]
where $\rho_{T,T'}:T\to T'$ is the dual morphism in~\eqref{lifting}
\begin{corollary}\label{LL formula}
Let $f\in C^{st}(G^F)$ be a stable function.
The transfer $f'=\on{t}_{\hat\rho}(f)\in C^{st}((G')^F)$  is given by the following formula
\begin{equation}\label{fromula}
    f'=\frac{1}{|W'|}\sum_{w'\in\rW'}(-1)^{l(w')}q^{-\dim G'/B'}\on{R}_{T'_{w'}}^{G'}(f_{w'})
\end{equation}

\end{corollary}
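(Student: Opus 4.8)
\textbf{Proof proposal for Corollary \ref{LL formula}.}

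The plan is to deduce this formula from the geometric transfer theorem (Theorem \ref{geometrization}) together with the description of stable functions in Proposition \ref{characterization of stable functions} and a sheaf-theoretic incarnation of the Deligne-Lusztig induction map. First I would choose, for the stable function $f$, a stable complex $\mF\in D^{st}_G(G)^F$ with $\chi_\mF = f$; this is possible by the surjectivity in Proposition \ref{characterization of stable functions}(2). Then $\mF'=\on{T}'_{\hat\rho}(\mF)$ satisfies $\chi_{\mF'} = \on{t}_{\hat\rho}(f) = f'$ by Theorem \ref{geometrization}, so it suffices to compute $\chi_{\mF'}$ explicitly and match it with the right-hand side of ~\eqref{fromula}. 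The key input is the expression $\on{Res}_{\rW'}(\mF')\is (\rho_{T,T'})_!\on{Res}_\rW(\mF)\langle \dim G'/B'-\dim G/B\rangle$ obtained in the proof of Theorem \ref{geometrization} via ~\eqref{res-ind}, which says that $\mF'$ is built from $\on{Res}_\rW(\mF)$ by applying the pushforward along $\rho_{T,T'}$ and then $\on{Ind}_{\rW'}$.

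Next I would unwind $\chi_{\mF'}$ through the induction functor $\on{Ind}_{\rW'} = \on{Ind}_{T'\subset B'}^{G'}(-)^{\rW'}\circ f^*$. The character of an induced complex $\on{Ind}_{T'\subset B'}^{G'}(\mG)$ is, by the sheaf-function dictionary applied to the correspondence $\frac{G'}{G'}\leftarrow \frac{B'}{B'}\rightarrow \frac{T'}{T'}$, a sum over $F$-conjugacy classes which reorganizes — exactly as in the classical Deligne-Lusztig story and as recorded in \cite[Section 3.3, Section 6]{LL1} — into $\frac{1}{|\rW'|}\sum_{w'\in\rW'}\on{R}^{G'}_{T'_{w'}}$ applied to the $F_{w'}$-twisted trace function of $\mG$ on $T'$. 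Taking $\rW'$-invariants produces the $\frac{1}{|\rW'|}$ and the sum over $w'$; the sign $(-1)^{l(w')}$ and the power $q^{-\dim G'/B'}$ come out of the normalization of $\on{R}^{G'}_{T'_{w'}}$ together with the shift $\langle -\dim G'/B'\rangle$ and the sign twist in the $\rW$-action noted in Remark \ref{sign}(1). Concretely, I expect to identify the $F_{w'}$-twisted trace function of $f^*\on{Res}_{\rW'}(\mF')$ on $T'$ with $(\rho_{T,T'})_!$ of the $F_w$-twisted trace function of $\on{Res}_\rW(\mF)$ on $T$, i.e.\ with $f_{w'} = (\rho_{T,T'})_! f_w$, provided one checks that $f_w$ as defined by ~\eqref{f_w} coincides with $\on{Tr}(F_w, \on{Res}_\rW(\mF)_{T,w})$ up to the appropriate twist — and this coincidence is precisely the content of the $\gamma$-function formula in Proposition \ref{characterization of stable functions}(1), since ~\eqref{f_w} says $f_w$ is the function whose convolution-eigenvalue on $\theta$ is $\gamma_f([\theta])$, matched against $q^{\dim G/B}\on{Tr}(F_w, H^*_c(T,\mF_{T,w}\otimes\mL_\theta^{-1}))$.

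The main obstacle, I expect, is bookkeeping of normalizations: keeping track of the Tate twists and shifts $\langle\,\cdot\,\rangle$, the sign character discrepancy in the $\rW$-action (Remark \ref{sign}), the factor $q^{\pm\dim G/B}$ appearing in two guises in Proposition \ref{characterization of stable functions}(1), and the standard normalization of $\on{R}^{G'}_{T'_{w'}}$ — making sure these combine to give exactly the coefficient $(-1)^{l(w')}q^{-\dim G'/B'}$ and not some variant. A secondary point requiring care is the passage between $F$-equivariant complexes on $\frac{G'}{G'}$ and class functions on $(G')^F$ for the \emph{non-representable} induction correspondence, but this is already handled in \cite{LL1} and I would simply invoke their computation of the trace of Frobenius on an induced complex. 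Once the normalization is pinned down, ~\eqref{fromula} follows by comparing $\chi_{\mF'}$, computed this way, with $f' = \chi_{\mF'}$ from Theorem \ref{geometrization}, and the characterization of $f_{w'}$ via $f_w$ and $\rho_{T,T'}$ is forced by Lemma \ref{char of transfer}.
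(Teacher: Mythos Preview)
Your proposal is correct and follows essentially the same route as the paper: lift $f$ to a stable complex $\mF$ via Proposition \ref{characterization of stable functions}(2), apply Theorem \ref{geometrization} to reduce to computing $\chi_{\mF'}$ for $\mF'=\on{T}'_{\hat\rho}(\mF)=\on{Ind}_{\rW'}((\rho_{T,T'})_!\on{Res}_\rW(\mF))\langle\dim G'/B'-\dim G/B\rangle$, invoke the Laumon--Letellier trace formula for the induction functor (the paper cites \cite[Theorem 5.2.6]{LL3}) to expand this as $\frac{1}{|\rW'|}\sum_{w'}(-1)^{l(w')}\on{R}^{G'}_{T'_{w'}}$ of the twisted trace on $T'$, and finally identify that twisted trace with $q^{-\dim G/B}f_{w'}$ via Proposition \ref{characterization of stable functions}(1). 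The normalization bookkeeping you flag as the main obstacle is exactly what the paper works through, and the sign $(-1)^{l(w')}$ is attributed there, as you anticipate, to Remark \ref{sign}.
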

\begin{proof}
Let $\mF\in D^{st}_G(G)$ be a $F$-equivaraint stable complex on $G$. Let 
$\mF_{T}=\on{Res}_\rW(\mF)\in D_{\rW}(T)^F$ and $\mF_{T'}=(\rho_{T,T'})_!\mF_{T}\in D_{\rW'}(T')^F$.
We have $\on{T}'_{\hat\rho}(\mF)=\on{Ind}_{\rW'}(\mF_{T'})\langle\dim G'/B'-\dim G/B\rangle$ and, by Theorem \ref{geometrization}, it suffices to show that 
\[f'=\on{Tr}(F,\pi_{G'}^*\on{T}'_{\hat\rho}(\mF))=
q^{\dim G/B-\dim G'/B'}
\on{Tr}(F,\pi_{G'}^*\on{Ind}_{\rW'}(\mF_{T'}))
\]
is equal to the right hand side of~
\eqref{fromula}. 
For any $w'\in\rW'$,
we have an isomorphism $F_{w'}^*\mF_{T'} \is F^*(w')^*\mF_{T'}\is F^*\mF_{T'}\is \mF_{T'}$ 
and we denote by $\mF_{T',w'}\in D(T')^{F_{w'}}$ the corresponding object. 
By Proposition \ref{characterization of stable functions}, we have
\[\on{Tr}(F_{w'},\mF_{T',w'})=q^{-\dim G/B}f_{w'}.\]
On the other hand, 
\cite[Theorem 5.2.6]{LL3} implies 
\[\on{Tr}(F,\pi_{G'}^*\on{Ind}_{\rW'}(\mF_{T'}))=\frac{1}{|\rW'|}\sum_{w'\in\rW'}(-1)^{l(w')}\on{R}_{T'_{w'}}^{G'}(\on{Tr}(F_{w'},\mF_{T',w'}))\]
here the factor $(-1)^{l(w')}$
is due to the nomalization of $\rW'$-action in Remark \ref{sign}.
Combining the about two equations we arrive the desired formula
\[f'=q^{\dim G/B-\dim G'/B'}\on{Tr}(F,\pi_{G'}^*\on{Ind}_{\rW'}(\mF_{T'}))=\frac{1}{|W'|}\sum_{w'\in\rW'}(-1)^{l(w')}q^{-\dim G'/B'}\on{R}_{T'_{w'}}^{G'}(f_{w'})\]
\end{proof}

\begin{example}
Consider the case $\hat\rho=\on{id}:\hat G\to \hat G$ is the identify map and 
$f=\delta$ is  the delta function at the unit element. We have $\delta=\on{t}_{\hat\rho}(\delta)
$ and $f_w=\delta_w$ is the delta fucntion at the unit of $T_w^F$ and 
the formula above says that
 $\delta=\frac{1}{|W|}\sum_{w\in\rW}(-1)^{l(w)}q^{-\dim G/B}\on{R}_{T_{w}}^{G}(\delta_w)$.
 This agrees with the formula in \cite[Proposition 12.20]{DM}.

\end{example}

\begin{remark}
Using \cite[Lemma 4.2.1]{LL1},
    one can check that the formula above agrees with  
    Theorem 1.0.1 in  \cite{LL3}.
\end{remark}

\quash{
\begin{corollary}

\begin{enumerate}
    \item it restrict to a monoidal functor 
    $\on{Trans}_{\rho}: S_G(G)\to S_{G'}(G')$
    between stable complexes.
    \item if $\hat\rho:\hat L\to \hat G$ 
    is the embedding of Levi subgroup, we have a natural isomorpohism
    \[\on{t}_{\hat\rho}=\on{Res}_{L}^G:C^{st}(G^F)\to C^{st}(L^F).\]
    \item if $\hat\rho:\hat G'\to \hat G$ is a normal morphism, we have a natural isomorphism
    \[\on{t}_{\hat\rho}=\rho_!:C^{st}(G^F)\to C^{st}((G')^{F'})\]
    where $\rho:G\to G'$ is  the dual morphism 
    in Example \ref{normal morphism} (2).
\end{enumerate}
    \end{corollary}

\begin{proof}
The desired transfer functor is given by 
\[\on{Trans}_{\rho}:=\on{Ind}_{N'}\circ\on{Trans}_{\rho,N}\circ\on{Res}_N:C_G(G)\is C_N(T)\longrightarrow C_{N'}(T')\is C_{G'}(G').\]
Property (1) follows from Theorem \ref{transfer:N central complexes}.
For property (2), we first note that  
by \cite[Corollary 4.5]{C3}
    the restriction functor induces a monoidal functor
    \[\on{Res}_{L\subset P}^G:C_G(G)\to C_L(L)\]
    between the category of vanishing complexes.
    Thus
it suffices to show 
\begin{equation}\label{case 2}
    \on{Res}_{N_L(T)}\circ\on{Trans}_\rho\is\on{Res}_{N_L(T)}\circ\on{Res}^G_{L\subset P}:C_G(G)\to C_{N_L(T)}(T)
\end{equation}
 By Example \ref{normal morphism} (1), 
$\on{Trans}_{\rho,N}:C_{N}(T)\to C_{N_L(T)}(T)$
is the forgetful functor and it follows from the transitivity of restriction functors that both sides of~\eqref{case 2} are isomorphic to
$\on{For}\circ\on{Res}_{N}:C_G(G)\to C_{N}(T)\stackrel{\on{For
}}\to C_{N_L(T)}(T)$. 

For property (3), it suffices to show
\begin{equation}\label{case 3}
    \on{Res}_{N'}\circ\on{Trans}_\rho\is\on{Res}_{N'}\circ[\rho]_!:C_G(G)\to C_{N'}(T').
\end{equation}
If 
$\hat\rho$ is isogeny (surjective with finite kernel) 
the claim is clear. 
If $\hat\rho$ is injective, then by Example \ref{normal morphism} (2), there is a connected reductive group $H'$ 
and an isogeny $G\to G'_1:=G'\times H'$ such that the dual morphism 
$\rho$
is given by $\rho:G\to G'_1\stackrel{\on{pr}}\to G'$.
Then
it is straight forward
to check that~\eqref{case 3}.
If $\hat\rho$ is surjective with connected kernel 
$\hat K':=\on{ker}(\hat\rho)$. 
Let $\hat\iota:\hat K'\to \hat G'$ be the inclusion 
and let $\iota:G\to K$ be the dual morphism (note that $\hat\iota$ is normal morphism). Then by \cite[Proposition 3.1.8]{LL1}, there is an isomorhim 
$G\is\on{ker}(\iota)$
and the dual morphism 
$\rho:G\is \on{ker}(\iota)\to G'$ is the natural inclusion.
Then it is again straight forward
to check~\eqref{case 3}. Finally the genreal case follows from the fact that 
normal morphisms factor as  compositions of 
surjections, isogenies, and normal embeddings.

\end{proof}
}

%%%%%%%%%%%%%

\end{document}